\renewcommand{\geq}{\geqslant}
\renewcommand{\leq}{\leqslant}
\newtheorem{theorem}{Theorem}
\newtheorem{lemma}[theorem]{Lemma}
\newtheorem{cor}[theorem]{Corollary}
\newtheorem*{cor*}{Corollary}
\begin{document}

\title{domination by geometric 4-manifolds}

\author{Jonathan A. Hillman }
\address{School of Mathematics and Statistics\\
     University of Sydney, NSW 2006\\
      Australia }

\email{jonathanhillman47@gmail.com}

\begin{abstract}
We consider aspects of the question
``when is an orientable closed 4-manifold $Y$ dominated by another 
such manifold $X$?",
focusing on the cases when $X$ is geometric or fibres non-trivially over
an orientable surface.
\end{abstract}

\keywords{4-manifold, aspherical, bundle, dominate, geometry}

\maketitle
S.-C. Wang has considered in detail properties of maps of non-zero degree 
between 3-manifolds.
In particular, he grouped aspherical 3-manifolds into 8 families, 
according to the nature of their JSJ decompositions,
and determined which pairs allowed maps of non-zero degree between representative 3-manifolds \cite{Wa91}.
Purely algebraic arguments for $PD_n$-groups with JSJ decompositions 
and all $n$ were given in \cite{Hi04}.
Sharper results for maps between
aspherical geometric 4-manifolds were given in \cite{Ne}.
We shall complement this work in dimension 4 by considering cases 
where the domain is geometric but not aspherical, 
or fibres non-trivially over a surface. 
In the latter case the strongest results are when the range is aspherical.

We begin by reviewing the results of \cite{Hi04} and \cite{Ne} for aspherical geometric 4-manifolds.
(Some of these earlier results are recovered below in passing.)
In \S2 we make some basic observations and give four simple lemmas.
The remaining sections are organized in terms of the geometry of the
dominating space $X$.
The geometries $\mathbb{S}^4$,  $\mathbb{CP}^2$, 
$\mathbb{S}^2\times\mathbb{S}^2$,  $\mathbb{S}^3\times\mathbb{E}^1$, 
$\mathbb{S}^2\times\mathbb{E}^2$
and $\mathbb{S}^2\times\mathbb{H}^2$ are considered
in \S3, and the geometries of solvable Lie type in \S4.
All total spaces of bundles with base and fibre $S^2$ or the torus $T$
have such geometries, excepting only $S^2\tilde\times{S^2}$.
In \S5 we consider domination by total spaces of $T$-bundles with
base a hyperbolic surface. 
Among these are manifolds with geometry $\mathbb{H}^2\times\mathbb{E}^2$
or $\widetilde{\mathbb{SL}}\times\mathbb{E}^1$,
but there are also non-geometric $T$-bundle spaces.
The next section considers aspherical domains which are (virtually)
both $S^1$-bundle spaces and mapping tori.
(This includes $\mathbb{H}^3\times\mathbb{E}^1$-manifolds.) 
In \S7 we consider bundles with fibre a hyperbolic closed surface,
and we show that if a bundle space $Y$ is dominated by a product
$B\times{F}$ then it is also a product.
Most bundle spaces are not geometric.
In the final section we consider dominations of aspherical geometric 4-manifolds 
by bundles with hyperbolic fibre.
These include reducible $\mathbb{H}^2\times\mathbb{H}^2$-manifolds,
which are finitely covered by products, 
and $\mathbb{H}^3\times\mathbb{E}^1$-manifolds.
We have little to say about irreducible 
$\mathbb{H}^2\times\mathbb{H}^2$-manifolds,
or about the geometries $\mathbb{H}^4$
and $\mathbb{H}^2(\mathbb{C})$.

Our main interest is in the existence of maps of non-zero degree.
For this purpose we may replace the domain $X$ and and range $Y$ 
by more convenient finite covering spaces, 
and assume that the map induces an epimorphism on the fundamental group.
In several places we ask whether a specific 4-manifold $Y$ has such 
a domination by one of the representative geometric 4-manifolds 
under consideration.
Note that although geometric 4-manifolds have natural smooth structures, 
the use of topological surgery implicit in some of our arguments only justifies
identifications  of the possible range spaces up to homeomorphism.

This note was prompted by a query from R. \.I. Baykur,
arising from \cite{ABCKLZ}.
In that paper the authors consider the more specific question of
which closed 4-manifolds have branched coverings by the total spaces
of surface bundles.
Their main results are that every 1-connected closed 4-manifold
has a branched covering of degree $\leq16$ by a product
$B\times{T}$, with $B$ a closed surface and $T$ the torus,
and every product $B\times{S^1}$ with $B$ a closed orientable 3-manifold
has a 2-fold branched cover by a symplectic 4-manifold which fibres
over $T$.
The maps considered below often have degree $1$, 
and then are either homotopy equivalences or not 
homotopic to branched covers.

I would like to thank R. \.I. Baykur for his question and for his comments 
on an early draft, 
and S. Vidussi for pointing out a blunder in \S7 of the original arXival submission.

\section{dominations between aspherical geometric 4-manifolds}

In the aspherical case the underlying question is essentially one of group theory.
In \cite{Hi04} it is shown that $PD_n$-groups with {\it max}-$c$
may be partitioned into families, analogous to those of Wang,
and that the pattern of possible maps of non-zero degree is very similar.
A group has {\it max}-$c$ if all chains of centralizers in the group are finite.
A $PD_n$-pair of groups $(G,\partial{G})$ is {\it atoroidal\/} if 
every polycyclic subgroup of Hirsch length $n-1$ is conjugate into 
a boundary component,
and is of {\it Seifert type\/} if it has a normal polycyclic subgroup 
of Hirsch length $n-2$.
Kropholler showed that  all $PD_n$-groups with {\it max}-$c$
have JSJ decompositions along virtually polycyclic subgroups 
of Hirsch length $n-1$ into pieces which are either atoroidal or of Seifert type
 \cite{Kr90}.

When $n=4$ the qualification ``of Seifert type" reduces to 
``having a normal $\mathbb{Z}^2$ subgroup",
and these families of groups are either
\begin{enumerate}
\item{} atoroidal; 
\item{} have a non-trivial JSJ decomposition with at least one atoroidal piece; 
\item{} have a non-trivial JSJ decomposition with all pieces of Seifert type;
\item{}(a) virtually polycyclic, but not virtually of Seifert type; or
(b) virtually polycyclic and of Seifert type but not virtually nilpotent:
\item{} virtually a product $G\times\mathbb{Z}^2$ with $G$ a $PD_2$-group and $\chi(G)<0$;
\item{} Seifert type but not virtually a product nor virtually polycyclic;
\item{} virtually nilpotent but not virtually abelian; or
\item{} virtually abelian.
\end{enumerate}
We have preserved Wang's enumeration,
but in higher dimensions it is useful to subdivide  type (4), 
the analogue of the class of $\mathbb{S}ol^3$-manifolds.

The fundamental groups of aspherical $n$-manifolds are $PD_n$-groups,
and the groups of geometric $4$-manifolds satisfy {\it max}-$c$.
Not all aspherical 4-manifolds with groups of types (1) or (6) are geometric,
and there are no geometric 4-manifolds with groups of type (2) or (3).
The correspondence with geometries is 
(1) $\mathbb{H}^4$, $\mathbb{H}^2(\mathbb{C})$, 
$\mathbb{H}^2\times\mathbb{H}^2$ and
$\mathbb{H}^3\times\mathbb{E}^1$;
(4.a) $\mathbb{S}ol_{m,n}^4$ (with $m\not=n$) and $\mathbb{S}ol_1^4$; 
(4.b) $\mathbb{S}ol^3\times{E}^1$;
(5) $\mathbb{H}^2\times\mathbb{E}^2$;
(6) $\widetilde{\mathbb{SL}}\times\mathbb{E}^1$;
(7) $\mathbb{N}il^3\times\mathbb{E}^1$ and $\mathbb{N}il^4$; and 
(8) $\mathbb{E}^4$.

For geometric 4-manifolds in the families (4--8) the conclusion of
\cite{Hi04} is that all maps between groups of different types have degree 0,
except for maps from  groups of type (5) to groups of type (8) and 
from (6) to $\mathbb{N}il^3\times\mathbb{E}^1$-groups in type (7).
(The assertion there that there are maps of nonzero degree
from groups of type (5) to groups of type (4c) is wrong.)
On the other hand every $\mathbb{N}il^3\times\mathbb{E}^1$-group
and every $\mathbb{E}^4$-group is so dominated.
Theorem 1.1  of \cite{Ne} is slightly sharper,
in that it shows that there are no such maps between groups of distinct 
geometries within types (4.a) and in (7).

\section{some general observations}

If $X$ and $Y$ are closed orientable $n$-manifolds then 
$X$ {\it dominates}  $Y$ if there is a map $f:X\to{Y}$ 
with nonzero degree.
If so, then 
\begin{enumerate}
\item{} $d$-fold (branched) finite covers have degree $d$;
\item{}the image of $\pi_1(f)$ has finite index in $\pi_1(Y)$, 
so $f$ factors through a map $\widehat{f}:X\to\widehat{Y}$, 
where $\widehat{Y}$ covers $Y$, 
and $\pi_1(\widehat{f})$ is an epimorphism;
\item{}if $R$ is a ring in which $d=deg(f)$ is invertible then 
$H^*(Y;R)$ is a subring of $H^*(X;R)$,
and is a direct summand as an $R$-module:
\item{}hence if $n=4$ then $\chi(Y)\leq2+\beta_2(X;\mathbb{Q})$.
\end{enumerate}

If $f$ has degree $d$ (for some choice of orientations)
then we shall say that $X$ {\it $d$-dominates}  $Y$.
If $X$ {\it 1-dominates} $Y$ then $\pi_1(f)$ is an epimorphism,
and $H^*(Y;\mathbb{Z})$ is a direct summand of $H^*(X;\mathbb{Z})$.
We say that $X$ {\it essentially dominates\/} $Y$ if $f$ 
has non-zero degree and $\pi_1(f)$ is an epimorphism.
(Such maps need not have degree 1,
as is already clear when $X=S^2$.
Self maps of $S^2$ of degree $>1$ induce isomorphisms on $\pi_1$,
but do not induce splittings of $H^2(S^2;\mathbb{Z})$.)
Clearly $X$ dominates $Y$ if and only if $X$ essentially dominates 
some finite cover of $Y$.

We note also that if $X$ and $V$ are orientable closed 4-manifolds 
with $\chi(X)=\chi(Y)$ then a map  $f:X\to{Y}$ is a homotopy equivalence 
if and only if it has degree 1 and $\pi_1(f)$ is an isomorphism
\cite[Theorem 3.2]{Hi}.

\begin{lemma}
\label{tf}
Let $f:X\to{Y}$ be a degree-$1$ map  between orientable closed $4$-manifolds.
If $H_1(X;\mathbb{Z})$ is torsion-free then so is $H_1(Y;\mathbb{Z})$.
\end{lemma}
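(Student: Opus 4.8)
The plan is to show that the torsion obstruction lives in $H_1$ and that degree-$1$ maps split integral homology. Concretely, I would prove the stronger statement that $H_1(Y;\mathbb{Z})$ is isomorphic to a direct summand of $H_1(X;\mathbb{Z})$ as an abelian group; since any subgroup of a torsion-free abelian group is torsion-free, and $H_1(X;\mathbb{Z})$ is torsion-free by hypothesis, the conclusion follows at once. This is the integral (i.e.\ $d=1$) refinement of observation (3) of \S2, read in homology rather than in cohomology.

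To obtain the splitting I would use the umkehr map built from Poincar\'e duality. Writing $D_X\colon H^k(X;\mathbb{Z})\to H_{4-k}(X;\mathbb{Z})$, $\alpha\mapsto [X]\cap\alpha$, for the duality isomorphism (valid with $\mathbb{Z}$-coefficients since $X$ is closed and oriented), and similarly $D_Y$, I would set $f_!=D_X\circ f^*\circ D_Y^{-1}\colon H_1(Y;\mathbb{Z})\to H_1(X;\mathbb{Z})$. The key computation is the projection formula $f_*([X]\cap f^*\beta)=f_*[X]\cap\beta=\deg(f)\,[Y]\cap\beta$; taking $\beta=D_Y^{-1}(a)$ and using $\deg(f)=1$ gives $f_*f_!(a)=[Y]\cap D_Y^{-1}(a)=a$. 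Hence $f_*f_!=\mathrm{id}_{H_1(Y;\mathbb{Z})}$, so $f_*$ is a split epimorphism and $H_1(X;\mathbb{Z})=\mathrm{im}(f_!)\oplus\ker(f_*)$ with $\mathrm{im}(f_!)\cong H_1(Y;\mathbb{Z})$.

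The main obstacle to keep in view is that a degree-$1$ map on its own guarantees only that $\pi_1(f)$, and hence $H_1(f)=f_*$, is an \emph{epimorphism}; surjectivity alone is insufficient, since a quotient of a torsion-free group can acquire torsion (for instance $\mathbb{Z}\twoheadrightarrow\mathbb{Z}/2$). The real content is therefore the \emph{splitting}, which is precisely where $\deg(f)=1$ is used and where Poincar\'e duality enters. Once the splitting is in hand the argument is formal: $H_1(Y;\mathbb{Z})$ is realized as a subgroup of the torsion-free group $H_1(X;\mathbb{Z})$ and is therefore torsion-free. The only routine point to verify is the naturality of the cap product and the projection formula for continuous maps between closed oriented topological manifolds, which holds in this generality.
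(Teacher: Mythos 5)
Your proof is correct, but it takes a genuinely different route from the paper's. The paper argues in cohomology: by the Universal Coefficient Theorem the torsion of $H_1(Y;\mathbb{Z})$ is detected by the torsion of $H^2(Y;\mathbb{Z})$ (and dually the hypothesis makes $H^2(X;\mathbb{Z})$ torsion-free), while observation (3) of \S2 with $R=\mathbb{Z}$ and $d=1$ makes $H^2(Y;\mathbb{Z})$ a direct summand of $H^2(X;\mathbb{Z})$, so torsion-freeness of $H_1(Y;\mathbb{Z})$ follows in two lines. You instead work directly in $H_1$: you construct the umkehr map $f_!=D_X\circ f^*\circ D_Y^{-1}$ and verify $f_*f_!=\mathrm{id}$ via the projection formula $f_*([X]\cap f^*\beta)=f_*[X]\cap\beta=\deg(f)\,[Y]\cap\beta$, exhibiting $H_1(Y;\mathbb{Z})$ as a literal direct summand of $H_1(X;\mathbb{Z})$. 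The degree computations ($D_Y^{-1}\colon H_1(Y)\to H^3(Y)$, $D_X\colon H^3(X)\to H_1(X)$) and the appeal to topological Poincar\'e duality and the projection formula are all in order. The two arguments rest on the same underlying mechanism --- the degree-$1$ splitting furnished by Poincar\'e duality, which is precisely what proves the paper's observation (3) --- but yours avoids the UCT detour through $H^2$ and proves the stronger statement that $H_1(f)$ is a split epimorphism of abelian groups, whereas the paper's proof is shorter because it quotes the cohomological splitting already recorded in \S2. Your remark that surjectivity of $H_1(f)$ alone is insufficient (a torsion-free group can surject onto a torsion group) correctly identifies the splitting as the essential content in either approach.
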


\begin{proof}
This follows from the Universal Coefficient Theorem,
since torsion in $H_1(Y;\mathbb{Z})$ is detected by torsion in 
$H^2(Y;\mathbb{Z})$,
and
$H^2(Y;\mathbb{Z})$ is a direct summand of $H^2(X;\mathbb{Z})$.
\end{proof}

\begin{lemma}
\label{Euler}
Let $f:X\to{Y}$ be a map of non-zero degree between orientable closed 
$4$-manifolds.
Suppose that there is an integer $D\geq0$ such that
$\beta_2(\hat{X};\mathbb{Q})\leq {D}$, 
for all finite covering spaces $\hat{X}$ of $X$.
Then
\begin{enumerate}
\item{}
if $\pi_1(Y)$ is infinite and has subgroups of arbitrarily large finite index 
then $\chi(Y)\leq0$;
\item{}if $\pi_1(Y)$ is finite then $|\pi_1(Y)|\chi(Y)\leq{D+2}$,
and so $|\pi_1(Y)|\leq\frac12(D+2)$.
\end{enumerate}
\end{lemma}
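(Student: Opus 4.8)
The plan is to leverage the elementary inequality $\chi(Y)\le 2+\beta_2(X;\mathbb{Q})$ recorded as observation (4) above---valid for \emph{any} map of nonzero degree between closed orientable $4$-manifolds, since it follows from the splitting of rational cohomology in observation (3)---together with the multiplicativity of the Euler characteristic under finite covers. The hypothesis bounding $\beta_2(\widehat X;\mathbb{Q})$ on \emph{all} finite covers of $X$ is exactly what makes this uniform. Rather than apply observation (4) to $f$ itself, I would apply it to pullbacks of $f$ along finite covers of $Y$, so that the growth $\chi(Y')=m\,\chi(Y)$ on the range competes against the fixed ceiling $D+2$ on the source.

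First I would fix a connected finite cover $p\colon Y'\to Y$ of degree $m$ and form the pullback
\[
\begin{CD}
\widehat X @>\widehat f>> Y' \\
@VqVV @VVpV \\
X @>f>> Y.
\end{CD}
\]
Since $p$ is an $m$-fold covering, so is $q$, whence $\widehat X$ is a closed orientable $4$-manifold, each of whose components is a connected finite cover of $X$. Orienting $Y'$ and $\widehat X$ by pulling back the orientations of $Y$ and $X$ through $p$ and $q$, the relation $f\circ q=p\circ\widehat f$ gives $\deg(\widehat f)=\deg(f)\ne 0$. As this total degree is the sum of the degrees of the restrictions of $\widehat f$ to the components of $\widehat X$, at least one component $\widehat X_0$ maps to $Y'$ with nonzero degree. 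Then $\widehat X_0$ is a connected finite cover of $X$, so $\beta_2(\widehat X_0;\mathbb{Q})\le D$ by hypothesis, and observation (4) applied to $\widehat f|_{\widehat X_0}\colon\widehat X_0\to Y'$ yields $\chi(Y')\le 2+\beta_2(\widehat X_0;\mathbb{Q})\le D+2$. Since $\chi(Y')=m\,\chi(Y)$, this reads $m\,\chi(Y)\le D+2$.

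For (1) the hypothesis supplies covers of arbitrarily large degree $m$; as $\chi(Y)$ is an integer, $m\,\chi(Y)\le D+2$ for all such $m$ forces $\chi(Y)\le 0$. For (2) I would take $Y'=\widetilde Y$, the universal cover, so $m=|\pi_1(Y)|$ and the inequality becomes $|\pi_1(Y)|\,\chi(Y)\le D+2$ directly. Finiteness of $\pi_1(Y)$ gives $\beta_1(Y;\mathbb{Q})=0$, hence $\chi(Y)=2+\beta_2(Y;\mathbb{Q})\ge 2$; combining $2\,|\pi_1(Y)|\le|\pi_1(Y)|\,\chi(Y)\le D+2$ then gives $|\pi_1(Y)|\le\tfrac12(D+2)$.

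The one genuine subtlety is that the pullback $\widehat X$ need not be connected---when $\pi_1(f)$ is not surjective it has $[\pi_1(Y):\mathrm{im}\,\pi_1(f)]$ components---and a priori the degrees on distinct components could cancel. The point I would stress is that they cannot all vanish, because they sum to $\deg(f)\ne 0$; isolating a single component of nonzero degree is precisely what lets me invoke the uniform bound $\beta_2\le D$, which is a statement about \emph{connected} finite covers, in tandem with observation (4). Everything else---multiplicativity of $\chi$, the preservation of degree in a pullback of covers, and $\beta_1=0$ for finite fundamental group---is routine.
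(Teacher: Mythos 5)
Your argument is correct and is essentially the paper's own proof: pull $f$ back over finite covers $Y'\to Y$ of degree $m$, combine the bound $\chi(Y')\leq 2+\beta_2(\hat{X}_0;\mathbb{Q})\leq D+2$ from observation (4) with the multiplicativity $\chi(Y')=m\chi(Y)$, let $m$ grow for part (1), and take the universal cover together with $\chi(Y)\geq 2$ for part (2). Your explicit handling of the possibly disconnected pullback---isolating one component of non-zero degree so that the hypothesis on connected finite covers of $X$ applies---is a detail the paper's terser proof leaves implicit, but it is the same approach.
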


\begin{proof}
If $:\hat{Y}\to{Y}$ is a finite covering and $:\hat{X}\to{X}$ is the induced covering 
then $f$ lifts to a dominating map $\hat{f}:\hat{X}\to\hat{Y}$. 
On the one hand $\chi(\hat{Y})=[\pi_1(Y):\pi_1(\hat{Y})]\chi(Y)$;
on the other, $\chi(\hat{Y})\leq{D+2}$. 
The first assertion follows easily.

The second assertion has a similar proof.
(Note that $\chi(Y)\geq2$,
since $Y$ is orientable and $\pi_1(Y)$ is finite.)
\end{proof}

In conjunction with this lemma, note that if the $L^2$-Betti numbers
$\beta_i^{(2)}(Y)=0$ for $i\leq1$ then 
$\chi(Y)=\beta_2^{(2)}(Y)\geq0$, 
by the $L^2$-Euler characteristic formula.
(This is the case if $\pi_1(Y)$ is infinite and amenable,
or has a finitely generated infinite normal subgroup of infinite index.) 
 
\begin{lemma}
\label{w2}
Let $f:X\to{Y}$ be a map between orientable closed $4$-manifolds.
Then
\begin{enumerate}
\item{}if $f$ has odd degree  and $w_2(X)=0$ then $w_2(Y)=0$;
\item{}if $Y$ is $1$-connected and $w_2(Y)=0$ then $\chi(Y)$ is even.
\end{enumerate}
\end{lemma}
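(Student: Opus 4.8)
The plan is to handle both parts through one mod~$2$ observation: for an orientable closed $4$-manifold, vanishing of $w_2$ is equivalent to evenness of the $\mathbb{Z}/2\mathbb{Z}$-intersection form. I would first record this via Wu's formula. Since orientability gives $w_1=0$, Wu's formula yields $w_2=v_2$, where the Wu class is characterized by $\langle v_2\cup x,[Y]\rangle=\langle \mathrm{Sq}^2x,[Y]\rangle=\langle x^2,[Y]\rangle$ for all $x\in H^2(Y;\mathbb{Z}/2\mathbb{Z})$. Because Poincar\'e duality makes the mod~$2$ cup-product pairing on $H^2$ non-degenerate, this says exactly that $w_2(Y)=0$ if and only if $\langle x^2,[Y]\rangle=0$ for every $x$.

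For (1) I would then transport this evenness across $f$. Given $x\in H^2(Y;\mathbb{Z}/2\mathbb{Z})$, set $y=f^*x$; using that $f^*$ is a ring map together with the projection formula, one computes $\langle y^2,[X]\rangle=\langle f^*(x^2),[X]\rangle=\langle x^2,f_*[X]\rangle=\deg(f)\,\langle x^2,[Y]\rangle$ in $\mathbb{Z}/2\mathbb{Z}$. The hypothesis $w_2(X)=0$ makes the left-hand side vanish (again by Wu on $X$), and since $\deg(f)$ is odd it is a unit mod~$2$; hence $\langle x^2,[Y]\rangle=0$. As $x$ is arbitrary, the form on $Y$ is even, so $w_2(Y)=0$.

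For (2) I would use that $Y$ being $1$-connected forces $H_1(Y;\mathbb{Z})=H_3(Y;\mathbb{Z})=0$ and $H_2(Y;\mathbb{Z})$ free, so $\chi(Y)=2+\beta_2(Y)$ and it suffices to show that $\beta_2(Y)$ is even. Now $w_2(Y)=0$ means the unimodular (by Poincar\'e duality) integral intersection form is even. Reducing mod~$2$, an even form has vanishing diagonal, so its reduction is a non-degenerate alternating form over $\mathbb{Z}/2\mathbb{Z}$; such a symplectic space has even rank, whence $\beta_2(Y)$ is even.

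The main obstacle is really just getting the Wu-class reformulation stated cleanly; after that both parts are short. The one step to watch is the multiplicativity in (1), where oddness of the degree is exactly what is needed for $\deg(f)$ to be invertible mod~$2$ — an even degree would give no information. Part (2) comes down to the standard fact that even unimodular lattices have even rank, for which the alternating-form reduction furnishes a self-contained argument.
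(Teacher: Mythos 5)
Your proof is correct, and it splits against the paper's as follows. Part (1) is essentially the paper's argument: the paper observes that odd degree makes $H^*(f;\mathbb{Z}/2\mathbb{Z})$ a monomorphism (degree invertible mod $2$), and that by Wu the hypothesis $w_2(X)=0$ says all squares vanish in $H^*(X;\mathbb{Z}/2\mathbb{Z})$, a property which then transports to $Y$; your projection-formula computation $\langle (f^*x)^2,[X]\rangle=\deg(f)\,\langle x^2,[Y]\rangle$ is the same mechanism made explicit. Part (2), however, takes a genuinely different route. The paper uses the Wu formulae to get $w_2(M)^2=w_4(M)$ for orientable closed $M$, so $[M]\cap w_2(M)^2\equiv\chi(M) \bmod 2$, and $w_2(Y)=0$ then forces $\chi(Y)$ even in one line; note this characteristic-class identity never uses $1$-connectedness, so the paper's proof actually establishes the conclusion for all orientable closed $4$-manifolds. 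Your argument instead uses $1$-connectedness to kill torsion and odd Betti numbers, reducing to $\chi(Y)=2+\beta_2(Y)$, and then invokes the lattice-theoretic fact that an even unimodular form has even rank, proved via its non-degenerate alternating mod-$2$ reduction. Your route is more elementary (no $w_4$, no identification of the top Stiefel--Whitney class with the mod-$2$ Euler class), but it genuinely needs the $1$-connectedness hypothesis as written; to drop it you would have to supply $\chi(Y)\equiv\dim H^2(Y;\mathbb{Z}/2\mathbb{Z}) \bmod 2$ by a further duality argument, which the paper's identity buys for free.
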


\begin{proof}
The map $f$ induces a monomorphism 
$H^*(f)$ from $H^*(Y;\mathbb{Z}/2\mathbb{Z})$
to $H^*(X;\mathbb{Z}/2\mathbb{Z})$.
Since $\xi^2=0$ for all $\xi\in{H^*(X;\mathbb{Z}/2\mathbb{Z})}$ 
the same is true for $H^*(Y;\mathbb{Z}/2\mathbb{Z})$, and so $w_2(Y)=0$.

If $M$ is an orientable 4-manifold then $w_2(M)^2=w_4(M)$, by the Wu formulae,
and so $[M]\cap{w_2(M)^2}\equiv\chi(M)$ {\it mod} (2).
Hence if $Y$ is 1-connected then $\chi(Y)$ is even.
\end{proof}

On the other hand, there is a degree-1 map from $CP^2$ to $S^4$,
and so $w_2(Y)=0$ does not imply that $w_2(X)=0$.

If $X$ is a cell complex of dimension $\leq4$ then $[X,CP^2]=[X,K(\mathbb{Z},2)]$,
by general position, since we may construct $K(\mathbb{Z},2)\simeq{CP^\infty}$ 
by adding cells of dimension $\geq6$ to $CP^2$.
Hence if $u$ is  a generator of $H^2(CP^2;\mathbb{Z})$ then
$f\mapsto{f^*u}$ defines a bijection $[X,CP^2]\to{H^2(X;\mathbb{Z})}$. 
If $X$ is a closed orientable 4-manifold the degree of $f$ is given by
$d=[X]\cap(f^*u)^2$.

An element $\xi\in{H^2(X;\mathbb{Z})}$ is in the image of
$[X,S^2]=[X,CP^1]$ if and only if $\xi^2=0$ \cite[Theorem 8.11]{Sp}.

There is a similarly defined surjection from $[X,S^3]$ to $H^3(X;\mathbb{Z})$.
 
\begin{lemma}
\label{homology rank}
Let $X=M\times{S^1}$, where $M$ is a $3$-manifold,
and let  $f:X\to{Y}$ be an essentially dominating map.
If the image of the $S^1$-factor in $\pi_Y$ is infinite then  
$Y\simeq{P\times{S^1}}$,  where $P$ is a $PD_3$-complex.
\end{lemma}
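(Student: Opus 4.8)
The plan is to promote the central circle coming from $X$ to a genuine product $S^1$-factor of $Y$, by fibring $Y$ over $S^1$ in that direction and then using centrality to trivialise the monodromy. Throughout write $G=\pi_1(M)$, so that $\pi_1(X)=G\times\mathbb{Z}$ with the $S^1$-factor generating the central $\mathbb{Z}$, and set $\pi=\pi_1(Y)$.

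First I would record the group theory. Since $f$ is essentially dominating, $\pi_1(f)$ is onto, so the image $t$ of the $S^1$-generator is central in $\pi$, and by hypothesis $C:=\langle t\rangle\cong\mathbb{Z}$. As $\pi_1(f)$ carries the $\mathbb{Z}$-factor into $C$, the composite $G\to\pi\to\pi/C$ is onto, so $\nu:=\pi/C$ is a finitely generated quotient of the $3$-manifold group $G$. Assuming, as we may, that $C$ has infinite index, it is an infinite amenable normal subgroup, so all the $L^2$-Betti numbers of $Y$ vanish and $\chi(Y)=0$ by the $L^2$-Euler characteristic formula used after Lemma \ref{Euler}; in particular $\beta_1(Y;\mathbb{Q})\geq1$ by Poincar\'e duality, consistent with the desired conclusion.

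Next I would fibre $Y$ over $S^1$ along $t$. The aim is to arrange that $f$ covers the projection $X\to S^1$, so that on passing to infinite cyclic covers $f$ lifts to a non-zero degree map $\widehat{f}\colon\widehat{X}\to\widehat{Y}$ with $\widehat{X}\simeq M$. The non-vanishing of the degree then shows, by the usual transfer argument, that $H_*(\widehat{Y};\mathbb{Q})$ is a direct summand of $H_*(\widehat{X};\mathbb{Q})=H_*(M;\mathbb{Q})$, and hence has finite total rank. This finiteness is exactly the hypothesis of the Poincar\'e-duality fibring theorem: $\widehat{Y}$ is then a finitely dominated $PD_3$-complex $P$ and $Y$ is homotopy equivalent to its mapping torus. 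Finally, since $t$ is central the induced monodromy on $\nu=\pi_1(P)$ is trivial; upgrading this to a trivial homotopy monodromy splits the mapping torus as $Y\simeq P\times S^1$, as required.

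The hard part will be the compatibility asserted in the previous paragraph. A priori $t$ need not be detected by any homomorphism $\pi\to\mathbb{Z}$, since its class in $H_1(Y;\mathbb{Q})$ may vanish (as it does for the centre of a nilpotent group), and then the $S^1$-direction of $X$ does not descend to a map $Y\to S^1$ and the relevant infinite cyclic cover of $X$ need not be $M\times\mathbb{R}$. Excluding this twisted alternative --- equivalently, showing that no non-trivial power of $t$ lies in the image of $G$, so that $C$ splits off as a direct factor and the Euler and monodromy obstructions vanish --- is where the product structure of the domain and the non-vanishing of $\deg(f)$ must both be used, beyond the bare centrality of $t$. Once the central circle has been split off in this way, the homology-rank and fibring steps are routine.
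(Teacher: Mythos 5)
You have correctly isolated the crux, but you have not proved it: your argument establishes the lemma only conditionally on the claim that the image $t$ of the $S^1$-generator generates a direct $\mathbb{Z}$-factor of $\pi_Y$ (equivalently, that no non-trivial power of $t$ lies in $f_*\pi_1(M)$, so that $\pi_Y\cong\sigma\times\mathbb{Z}$). Your final paragraph names this as ``the hard part'' and then stops, so what you have is a plan rather than a proof. The step is genuinely non-trivial: a group generated by a normal subgroup together with an infinite-order central element need not split --- the discrete Heisenberg group is generated by the image of a free group of rank two together with its central commutator, and it has no $\mathbb{Z}$ direct factor --- so centrality plus surjectivity of $\pi_1(f)$ alone cannot yield the splitting, and some use of $\deg f\neq0$ (or of known restrictions on $4$-manifolds with $\chi=0$) is indispensable exactly where you break off. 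Everything downstream in your sketch depends on this: without the splitting there is no epimorphism $\pi_Y\to\mathbb{Z}$ sending $t$ to a generator and killing $f_*\pi_1(M)$, so the pullback infinite cyclic cover of $X$ need not be $M\times\mathbb{R}$ and the transfer argument has nothing to bite on. A secondary gap: even granted the splitting, observing that the monodromy is trivial on $\pi_1(P)$ does not trivialize the homotopy monodromy, since a self-homotopy-equivalence inducing the identity on $\pi_1$ need not be homotopic to the identity; this is why one should quote the fibration theorem in its product form rather than ``upgrade'' by hand.

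For comparison, the paper's own proof asserts the same splitting in its first sentence (``the hypotheses imply that $\pi_Y$ is a product $\sigma\times\mathbb{Z}$'') with no more justification than you offer, and then finishes far more economically: it applies the argument of Lemma \ref{Euler} to the finite covering spaces $X_n$ and $Y_n$ associated to the subgroups $\pi_1(M)\times{n\mathbb{Z}}$ and $\sigma\times{n\mathbb{Z}}$ (each $X_n\cong M\times S^1$, so $\beta_2$ of these covers is uniformly bounded, giving $\chi(Y)\leq0$), bounds $\chi(Y_n)=n\chi(Y)$ below by $-\beta_1(\sigma)-1$ to force $\chi(Y)=0$, and then invokes \cite[Theorem 4.5]{Hi}, which already packages the finite-domination, fibring and product-splitting work that you propose to redo by hand with infinite cyclic covers. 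Your Cheeger--Gromov route to $\chi(Y)=0$ via the infinite amenable normal subgroup $\langle t\rangle$ is a legitimate, arguably cleaner, alternative to the paper's squeeze, and your transfer over infinite cyclic covers can be made rigorous (the lift to the pullback cover is proper); but neither of these repairs the missing splitting step, which is where your proposal, unlike a complete proof, ends.
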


\begin{proof}
The hypotheses imply that $\pi_Y$ is a product $\sigma\times\mathbb{Z}$, 
for some group $\sigma$.
We may apply the argument of Lemma \ref{Euler}
to the covering spaces $X_n$ and $Y_n$ associated to the subgroups 
of the form $\pi_1(M)\times{n\mathbb{Z}}$ and $\sigma\times{n\mathbb{Z}}$
to show that $\chi(Y)\leq0$.
On the other hand,
\[
\beta_2(Y_n)\geq\beta_2(\sigma\times\mathbb{Z})\geq\beta_1(\sigma)=\beta_1(Y_n)-1,
\]
and so $n\chi(Y)=\chi(Y_n)\geq-\beta_1(\sigma)-1$ for all $n$.
Hence $\chi(Y)=0$, and so $Y\simeq{P}\times{S^1}$, 
where $P$ is a $PD_3$-complex \cite[Theorem 4.5]{Hi}.
\end{proof}

\begin{lemma}
\label{mapping torus}
Let $M$ be the mapping torus of a self-homeomorphism
$\varphi$  of an $n$-manifold $N$. 
Then $M$ $1$-dominates ${S^n}\times{S^1}$.
\end{lemma}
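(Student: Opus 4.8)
The plan is to realise $S^n\times S^1$ as the mapping torus of the identity map $\mathrm{id}_{S^n}$, and then to build $f$ as a fibre-preserving map of mapping tori lying over the identity of the base circle. Writing $M=N\times[0,1]/(x,1)\sim(\varphi(x),0)$, with bundle projection $p:M\to S^1$ and fibre $N$, a map $M\to S^n\times S^1$ over $S^1$ is the same datum as a homotopy $H:N\times[0,1]\to S^n$ whose ends satisfy $H_1=H_0\circ\varphi$; equivalently, a map $c:N\to S^n$ together with a homotopy from $c$ to $c\circ\varphi$. The fibrewise map such data determine has constant fibrewise degree $\deg(c)$ and covers the degree-one map $\mathrm{id}_{S^1}$ on the base, so the point is to arrange $\deg(c)=1$ together with $c\circ\varphi\simeq c$.

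First I would take for $c$ the standard collapse map, sending the complement of an embedded open $n$-disc in $N$ to the basepoint; this has degree $1$. The crucial input is that $\dim N=n=\dim S^n$, so Hopf's classification theorem applies: for a closed connected $n$-manifold the assignment $[N,S^n]\to H^n(N;\mathbb{Z})$, $g\mapsto\deg(g)$, is a bijection. Since domination refers to oriented manifolds, $M$ is orientable, which forces $N$ orientable and $\varphi$ orientation-preserving; hence $\deg(c\circ\varphi)=\deg(\varphi)\deg(c)=1=\deg(c)$. Hopf's theorem then supplies a homotopy $H$ from $c$ to $c\circ\varphi$, and gluing $H$ over the interval produces the desired $f:M\to S^n\times S^1$ with $p=\mathrm{pr}_{S^1}\circ f$ and $f|_N=c$.

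It then remains to check that $\deg(f)=1$. I would invoke multiplicativity of degree for a fibre-preserving map of bundles over $S^1$: here $f$ covers $\mathrm{id}_{S^1}$ (degree $1$) and restricts on each fibre to $c$ (degree $1$), whence $\deg(f)=1\cdot1=1$. Concretely, if $\alpha$ and $\beta$ generate $H^n(S^n;\mathbb{Z})$ and $H^1(S^1;\mathbb{Z})$ then $f^*(\alpha\times\beta)=q^*\alpha\cup p^*\beta$, where $q=\mathrm{pr}_{S^n}\circ f$; evaluating on $[M]$ and using that $q^*\alpha$ restricts to a generator of $H^n(N;\mathbb{Z})$ while $p^*\beta$ is Poincar\'e dual to the fibre class $[N]$ gives $\langle q^*\alpha\cup p^*\beta,[M]\rangle=\langle c^*\alpha,[N]\rangle=1$.

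The genuinely essential step, rather than an obstacle, is producing the homotopy $c\circ\varphi\simeq c$: it succeeds only because $N$ has the same dimension as the target sphere, so that homotopy classes of maps $N\to S^n$ are detected by degree alone. For a fibre of dimension exceeding that of the target this reasoning breaks down, which is precisely why the conclusion is stated for $S^n\times S^1$ with $n=\dim N$.
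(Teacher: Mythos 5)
Your proof is correct, but it takes a genuinely different route from the paper's. The paper first normalizes the monodromy: it isotopes $\varphi$ to a homeomorphism fixing a disc $D^n\subset N$ (changing $M$ only up to homeomorphism), after which the degree-$1$ collapse map $c:N\to N/\overline{N\setminus D^n}\cong S^n$ commutes with $\varphi$ \emph{strictly}, so collapsing in each fibre immediately yields a map $M\to S^n\times S^1$ of degree $1$, with no homotopy theory at all. You instead keep $\varphi$ as given and observe that a map over $S^1$ amounts to the collapse map $c$ together with a homotopy $c\simeq c\circ\varphi$, which you produce from Hopf's classification theorem once $\deg(c\circ\varphi)=\deg(c)$ is known --- correctly grounded in the orientability of $M$, which forces $N$ orientable and $\varphi$ orientation-preserving. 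Both arguments rest on the same collapse map and the same dimensional coincidence $\dim N=n$; the paper buries that coincidence in the geometric normalization (a strictly $\varphi$-invariant disc replaces the homotopy), whereas your version makes explicit exactly what is needed --- that $[N,S^n]$ is detected by degree --- and so isolates where orientability and $\dim N=n$ enter, as your final paragraph rightly emphasizes. What your route costs is Hopf's theorem; what it buys is avoiding the (mild, though in the topological category not completely trivial) isotopy fact that a homeomorphism can be assumed to fix a disc pointwise. Your degree computation via $f^*(\alpha\times\beta)=q^*\alpha\cup p^*\beta$ and the Poincar\'e duality of $p^*\beta$ to the fibre class is sound up to choices of orientation, which the paper's convention (``degree $d$ for some choice of orientations'') absorbs. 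The one shared implicit hypothesis is that $N$ is connected --- otherwise $\varphi$ may permute components and both the fixed-disc normalization and the Hopf argument need adjustment --- but that is the standing convention in the paper.
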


\begin{proof}
We may assume that $\varphi$ fixes a disc $D^n\subset{N}$.
Collapsing the image of $\overline{N\setminus{D^n}}$ to a point in each fibre
induces a map from $M$ to $S^n\times{S^1}$ which clearly has degree 1.
\end{proof}

The following simple lemma is based on \cite[Lemma 2.3]{HLWZ}.

\begin{lemma}
\label{lens space}
Let $P$ be a $PD_3$-complex with finite fundamental group $\rho$.
If $M$ is an orientable $3$-manifold such that $\pi_1(M)$ maps onto $\rho$
then there is a map $f:M\to{P}$ with non-zero degree. 
\end{lemma}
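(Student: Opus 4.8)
The plan is to build $f$ cell by cell and then to correct its degree. The starting observation is that, since $\rho$ is finite, the universal cover $\widetilde{P}$ is a finite (hence finitely dominated) cover of $P$, and so is itself a $1$-connected $PD_3$-complex. By Poincar\'e duality it has the homology of $S^3$, and the Hurewicz and Whitehead theorems then give $\widetilde{P}\simeq S^3$. In particular $\pi_2(P)=\pi_2(\widetilde P)=0$ and $\pi_3(P)\cong\pi_3(S^3)\cong\mathbb{Z}$; these two facts are what make the construction go.

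First I would realise $\phi\colon\pi_1(M)\to\rho$ by a map on the $2$-skeleton. Give $M$ a finite CW structure. Sending the $1$-cells to loops representing the $\phi$-images of a generating set, and using that each relator maps to a nullhomotopic loop in $P$ (because $\phi$ is a homomorphism and $\pi_1(P)=\rho$), one extends over the $2$-cells to a map $g\colon M^{(2)}\to P$ with $\pi_1(g)=\phi$. The obstruction to extending $g$ over a $3$-cell lies in $\pi_2(P)=0$, so $g$ extends to a map $f_0\colon M\to P$ still inducing $\phi$.

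The only substantive point is to arrange that the degree is non-zero. Homotopy classes of extensions of $g$ over the $3$-cells form a torsor under $H^3(M;\pi_3 P)\cong\mathbb{Z}$, and two extensions differing by $\delta$ are related by the pinch co-action $M\to M\vee S^3$ followed by $f_0\vee\alpha$, where $\alpha\colon S^3\to P$ corresponds to $\delta$. Since the pinch carries $[M]$ to $[M]+[S^3]$, this alters the degree by the Hurewicz image of $[\alpha]\in\pi_3(P)$ in $H_3(P)$. I would identify that image as $|\rho|\mathbb{Z}$: a generator of $\pi_3(P)$ is represented by the composite of a homotopy equivalence $S^3\to\widetilde{P}$ with the $|\rho|$-sheeted covering projection $p\colon\widetilde{P}\to P$, and $p_*[\widetilde P]=|\rho|[P]$ because the transfer sends $[P]$ to $[\widetilde P]$. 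As $\delta$ ranges over $H^3(M;\pi_3 P)$ the class $[\alpha]$ ranges over all of $\pi_3(P)$, so the attainable degrees fill out the coset $\deg(f_0)+|\rho|\mathbb{Z}$, an infinite set. Choosing an extension of non-zero degree then finishes the proof; concretely, if $\deg(f_0)=0$ one replaces $f_0$ by its pinch with a generator of $\pi_3(P)$, which adds $|\rho|\neq0$.

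The main obstacle is exactly this degree bookkeeping: the cellular construction yields a map inducing $\phi$ essentially for free, but a careless extension over the top cells could have degree $0$, so the crux is the computation that the Hurewicz image $\pi_3(P)\to H_3(P)$ equals $|\rho|\mathbb{Z}$, together with the coaction formula showing the realisable degrees form a full coset of it. (Implicit here is that $P$ is orientable: an orientable $M$ can carry no map of non-zero degree onto a non-orientable $P$, since by naturality of $w_1$ and surjectivity of $\phi$ this would force the orientation character of $P$ to vanish. When $P$ is orientable the coefficients above are untwisted, $H_3(P)\cong\mathbb{Z}$, and the argument applies verbatim.)
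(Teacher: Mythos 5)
Your proposal is correct and is essentially the paper's own argument: both exploit $\widetilde{P}\simeq S^3$ and $\pi_2(P)=0$ to build a map inducing the given epimorphism (your skeletal obstruction-theoretic construction is the same in substance as the paper's device of mapping into $K(\rho,1)=P\cup e^{\geq4}$ and applying cellular approximation), and both repair a degree-zero map by pinching off an $S^3$ mapped via $S^3\simeq\widetilde{P}\to P$, which has degree $\pm|\rho|$. Your extra bookkeeping (the coset $\deg(f_0)+|\rho|\mathbb{Z}$ of attainable degrees via the Hurewicz image of $\pi_3(P)$) is a correct refinement of the same mechanism, though note that the orientability of $P$ is most cleanly obtained not from naturality of $w_1$ but from the standard fact that a finite group acting freely on a homotopy $S^3$ acts trivially on $H_3$, as the paper implicitly uses.
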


\begin{proof}
Since $\rho$ is finite,  $P$ is orientable and has universal cover
$\widetilde{P}\simeq{S^3}$.
Hence $\pi_2(P)=0$, and so we may construct a $K(\rho,1)$ space 
$K=P\cup{e^{\geq4}}$ 
by adding cells of dimension $\geq4$ to $P$.
Let $g:M\to{K}$ be a map such that $\pi_1(g)$ is an epimorphism.
We may assume that $g$ has image in $P$, by cellular approximation.
If $g:M\to{P}$ has non-zero degree then we set $f=g$.
Otherwise,
let $p:M\to{M\vee{S^3}}$ be the pinch map,
and let $c:S^3\to{P}$ be the composition of a homotopy equivalence
$S^3\simeq\widetilde{P}$ with the universal covering projection. 
Then $f=(g\vee{c})\circ{p}$ has the desired properties.
\end{proof}

When $\rho$ is cyclic and $H^1(M;\mathbb{Z})\not=0$
we may assume that $g$ factors through $S^1$,
and so has degree 0. 
In this case $f$ has degree $|\rho|$, 
as in \cite{HLWZ}.

\begin{lemma}
\label{asph}
Let $f:X\to{Y}$ be a map between orientable closed $4$-manifolds.
If $Y$ is aspherical and $\pi_1(f)$ factors through a group $G$ 
such that $H_4(G;\mathbb{Q})=0$ then $f$ has degree $0$.
\end{lemma}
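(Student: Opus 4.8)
The plan is to exploit asphericity of $Y$ to reduce the degree computation to a statement about group homology, where the hypothesis $H_4(G;\mathbb{Q})=0$ can be applied directly. Since $Y$ is aspherical it is a $K(\pi_Y,1)$, so the classifying map $c_Y\colon Y\to K(\pi_Y,1)$ is a homotopy equivalence and induces an isomorphism $H_*(Y;\mathbb{Q})\cong H_*(\pi_Y;\mathbb{Q})$. In particular $H_4(Y;\mathbb{Q})\cong\mathbb{Q}$, generated by the image of the fundamental class $[Y]$, and the degree of $f$ is characterized by $f_*[X]=\deg(f)\,[Y]$ in $H_4(Y;\mathbb{Q})$. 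Thus it suffices to show $f_*[X]=0$.

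The first key step is to factor $f$ through classifying spaces. Let $c_X\colon X\to K(\pi_X,1)$ classify the universal cover of $X$, and write $g=\pi_1(f)$. Because $Y$ is aspherical, obstruction theory (equivalently, the bijection $[X,K(\pi_Y,1)]\cong\mathrm{Hom}(\pi_X,\pi_Y)/\text{conj}$) gives a homotopy $f\simeq B g\circ c_X$, where $Bg\colon K(\pi_X,1)\to K(\pi_Y,1)=Y$ is induced by $g$. Hence on rational homology $f_*=(Bg)_*\circ (c_X)_*$.

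The second key step uses the factorization of $g$. By hypothesis $g$ factors as $\pi_X\xrightarrow{a}G\xrightarrow{b}\pi_Y$, so $Bg\simeq Bb\circ Ba$ and therefore
\[
f_*[X]=(Bb)_*\,(Ba)_*\,(c_X)_*[X].
\]
Now $(Ba)_*(c_X)_*[X]$ lies in $H_4(G;\mathbb{Q})$, which vanishes by assumption. Consequently $f_*[X]=0$ in $H_4(Y;\mathbb{Q})$, and since $[Y]$ generates this group we conclude $\deg(f)=0$.

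The only delicate point is the homotopy factorization $f\simeq Bg\circ c_X$: one must be sure that a map into an aspherical target is determined up to homotopy by its effect on $\pi_1$, so that it really does pass through the classifying space of any group through which $\pi_1(f)$ factors. This is standard for maps into a $K(\pi,1)$, and once it is in hand the rest is the elementary observation that a homology class routed through the trivial group $H_4(G;\mathbb{Q})=0$ must itself be zero.
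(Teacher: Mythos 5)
Your proposal is correct and follows exactly the paper's argument: since $Y$ is aspherical, $f$ is determined up to homotopy by $\pi_1(f)$ and hence factors through $K(G,1)$, so $f_*[X]$ passes through $H_4(G;\mathbb{Q})=0$ and the degree vanishes. You have merely written out in full the factorization through classifying spaces that the paper's two-line proof leaves implicit.
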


\begin{proof}
If $Y$ is aspherical then $f$ is determined by $\pi_1(f)$,
and so factors through $K(G,1)$.
Since $H_4(G;\mathbb{Q})=0$ the lemma follows.
\end{proof}

We shall henceforth assume that all manifolds considered are closed 
and (excepting $RP^2$) orientable,
and that $X$ and $Y$ are 4-manifolds,
$\pi_1(f)$ is an epimorphism and $f:X\to{Y}$ has degree $d\not=0$.
All homology and cohomology groups have coefficients $\mathbb{Q}$, 
unless otherwise specified.
If $F$ is a subgroup of $\pi_1(Y)$ then $Y_F$ is the associated covering space.
If $W$ and $Z$ are topological spaces then we write $W\simeq{Z}$ 
if they are homotopy equivalent and $W\cong{Z}$ if they are homeomorphic.

\section{compact or mixed compact-aspherical}

Suppose that $X$ has one of the compact or mixed compact-solvable
geometries $\mathbb{S}^4$, 
$\mathbb{CP}^2$, $\mathbb{S}^2\times\mathbb{S}^2$, 
$\mathbb{S}^3\times\mathbb{E}^1$ or $\mathbb{S}^2\times\mathbb{E}^2$.
Then $X$ is finitely covered by one of $S^4$, 
$CP^2$, $S^2\times{S^2}$, 
$S^3\times{S^1}$ or $S^2\times{T}$, respectively.
With these geometries we shall also consider the bundle space
$S^2\tilde\times{S^2}$ and the mixed compact-aspherical geometry 
$\mathbb{S}^2\times\mathbb{H}^2$.
(See \cite[Chapters 10--12]{Hi}.)

\medskip
$\mathbb{S}^4$. 
We may assume that $X=S^4$.
Then $\pi_1(Y)=1$ and $\beta_2(Y)=0$, 
and so $Y\simeq{S^4}$.
If $d=1$ then $f$ is homotopic to a homeomorphism.

\medskip
$\mathbb{CP}^2$. 
We may assume that $X=CP^2$.
Then $\pi_1(Y)=1$ and $\beta_2(Y)=1$ or  0.
Hence $Y$ is homeomorphic to one of $CP^2$, $Ch=*CP^2$ 
(the fake complex projective plane) or $S^4$.

\medskip
$\mathbb{S}^2\times\mathbb{S}^2$. 
We may assume that $X=S^2\times{S^2}$.
Then $\pi_1(Y)=1$ and $\beta_2(Y)=2,1$ or 0.
If $\beta_2(Y)=2$ then $Y\cong{X}$,
$S^2\tilde\times{S^2}=CP^2\sharp-CP^2$ or $CP^2\sharp{CP^2}$.
If also $d=1$ then $f$ is homotopic to a homeomorphism.
Let $q:S^2\tilde\times{S^2}\to{S^2}$ be the nontrivial $S^2$-bundle 
over $S^2$.
The pullback of $q$ over a degree-2 map from $S^2$ to $S^2$
is a trivial bundle.
Hence there is a degree-2 map from $X$ to $S^2\tilde\times{S^2}$.
(There is no map of degree 1 \cite[Theorem 3.2]{Hi},
and the degree must be even, by Lemma \ref{w2}.)

If $\beta_2(Y)=1$ then $Y\simeq{CP^2}$. 
There are maps $f:S^2\times{S^2}\to{CP^2}$ of every even degree,
but none of degree 1.

If $\beta_2(Y)=0$ then $Y$ is homeomorphic to $S^4$.

\medskip
$S^2\tilde\times{S^2}$. 
There are degree-1 maps from $S^2\tilde\times{S^2}$ to 
$CP^2$ and to $S^4$,
and there is a degree-2 map to $S^2\times{S^2}$.
(There is no map of degree 1 \cite[Theorem 3.2]{Hi}.)

Maps between $S^2\times{S^2}$ or $S^2\tilde\times{S}^2$ 
and $CP^2\sharp{CP^2}$ have degree 0, since the former have
signature 0 while $CP^2\sharp{CP^2}$ has signature 2,
and a map $f$ of non-zero degree would induce a ring isomorphism
$H^*(f;\mathbb{Q})$.

\medskip
$\mathbb{S}^3\times\mathbb{E}^1$. 
We may assume that $X=S^3\times{S^1}$.
Then $\pi_1(Y)$ is cyclic, and $\beta_2(Y)=0$.

If $\pi_1(Y)\cong\mathbb{Z}$ then $Y\cong{S^3\times{S^1}}$
\cite[Theorem 11.1]{Hi}. 
If also $d=1$ then $f$ is homotopic to a homeomorphism.

If $\pi_1(Y)\cong\mathbb{Z}/n\mathbb{Z}$ then $n=1$,
by Lemma \ref{Euler}.
Hence $Y$ is homeomorphic to $S^4$.

\medskip
$\mathbb{S}^2\times\mathbb{E}^2$.
We may assume that $X=S^2\times{T}$.
If $\pi_1(Y)$ is infinite then $\chi(Y)=0$, 
by Lemma \ref{Euler} and the subsequent remark.

If $\beta_1(Y)=2$ then $\pi_1(Y)\cong\mathbb{Z}^2$,
and so $\pi_1(f)$ is an isomorphism.
Since $H^2(Y;\mathbb{Z})$ is a direct summand of $H^2(X;\mathbb{Z})$,
it follows that $H^*(f)$ is an isomorphism of cohomology rings.
In particular,  $f$ has degree 1.
Since $\chi(Y)=\chi(X)$, these observations imply that $f$ is a homotopy equivalence \cite[Theorem 3.2]{Hi}.
Hence $f$ is homotopic to a homeomorphism \cite[Theorem 6.16]{Hi}.

If $\beta_1(Y)=1$ then $\chi(Y)=0$ and 
$\pi_1(Y)\cong\mathbb{Z}\oplus\mathbb{Z}/n\mathbb{Z}$, for some $n\geq1$.
Hence $Y\simeq{L}\times{S^1}$, 
where $L$ is a lens space \cite[Theorem 11.1]{Hi},
and so $S^2\times{T}=(S^2\times{S^1})\times{S^1}$ essentially dominates $Y$, 
by Lemma \ref{lens space}.

If $\beta_1(Y)=0$ then $|\pi_1(Y)|\chi(Y)\leq4$, 
by Lemma \ref{Euler}.
There are obvious degree-1 maps to $S^2\times{S^2}$
(since $T$ 1-dominates $S^2$) and $S^4$.
There are degree-2 maps to $S^2\tilde\times{S^2}$ and to $CP^2$,
but none of degree 1.

If $\chi(Y)=2$ and $\pi_1(Y)=\mathbb{Z}/2\mathbb{Z}$ then $Y$ is homotopy equivalent to one of the two orientable 4-manifolds which are total spaces of $S^2$-bundles over $RP^2$ \cite[Chapter 12]{Hi}.
A map $f:X\to{Y}$ of non-zero degree lifts to a map
$f^+:X^+\to\widetilde{Y}=S^2\times{S^2}$ 
of non-zero degree, which induces an isomorphism
$H_2(f^+;\mathbb{Q}):H_2(X^+;\mathbb{Q})\to
{H_2(\widetilde{Y};\mathbb{Q})}=\mathbb{Q}\otimes\pi_2(Y)$.
Since $\pi_1(X$ acts trivially on $H_2(X^+;\mathbb{Q})$ 
but $\pi_1(Y)$ acts non-trivially on $\pi_2(Y)$ it follows that
$\pi_1(f)$ cannot be an epimorphism.
Thus $S^2\times{T}$ does not essentially dominate either of these bundle spaces.

\medskip
$\mathbb{S}^2\times\mathbb{H}^2$.
If $X$ is a $\mathbb{S}^2\times\mathbb{H}^2$-manifold
then it is finitely covered by $S^2\times{B}$,
where $B$ is a hyperbolic surface.
Since there is a degree-1 map from $B$ to $T$, 
there is a degree-1 map from $S^2\times{B}$ to $S^2\times{T}$, 
and hence there are such maps to $S^3\times{S^1}$, 
$S^2\times{S^2}$ and $S^4$.
Thus every $\mathbb{S}^2\times\mathbb{H}^2$-manifold and every 4-manifold
with one of the five compact or mixed compact-solvable geometries above
is dominated by such a product.
On the other hand, 
any map from $S^2\times\hat{B}$ to an aspherical 4-manifold factors 
through the projection to $\hat{B}$, and so has degree 0.

\section{solvable lie type}

There are six solvable Lie geometries.
One is an infinite family $\mathbb{S}ol_{m,n}^4$ of closely related geometries, 
which includes the product geometry
$\mathbb{S}ol^3\times\mathbb{E}^1$ as the equal parameter case $m=n$.
This product geometry needs separate consideration here.
(See \cite[Chapters 7--8]{Hi} for details of these geometries 
and the associated fundamental groups.
It appears to be unknown when different
pairs $(m,n)$ and $(m',n')$
determine the same geometry $\mathbb{S}ol_{m,n}^4$. 
See \cite[page 137]{Hi}.)

Suppose that $X$ has a solvable Lie geometry.
Then $\pi_X=\pi_1(X)$ is polycyclic of Hirsch length 4
and $\chi(X)=0$, so $\beta_2(X)\leq6$.
After passing to a finite covering space, if necessary,
we may assume that $X$ is a solvmanifold
(i.e., a coset space of a 1-connected solvable Lie group )
and either $X=T^4$ or $\pi_X$ is not virtually abelian.
Thus $X$ is parallelizable.
Moreover, 
$X$ is a mapping torus $N\rtimes{S^1}$, 
where $N=T^3$ or is a coset space of $Nil^3$.
If $\pi_X$ is nilpotent and $S=\{g_1,\dots,g_\beta\}$ represents a basis for the 
maximal torsion free quotient of $\pi_X/\pi_X'$ then the subgroup
generated by $S$ has finite index in $\pi_X$ and $S/S'\cong\mathbb{Z}^\beta$.
Hence in this case we may also assume that 
$H_1(X;\mathbb{Z})$ is torsion-free.

\begin{lemma}
Let $f:X\to{Y}$ be a map of non-zero degree between orientable closed 
$4$-manifolds.
If $X$ is a solvmanifold and $\pi_1(f)$ is an epimorphism then
\begin{enumerate}
\item$\pi_1(Y)$ is polycyclic of Hirsch length $h_Y\leq4$,
and $w_2(Y)=0$;
\item{}
if $h_Y=0$ then $\pi_1(Y)$ is finite and $2\leq\chi(Y)\leq2+\beta_2(X)$;
\item{}if $h_Y=1$ and $F$ is the maximal finite normal subgroup of $\pi_1(Y)$
then $Y_F$ is a $PD_3$-complex,
and $Y$ is finitely covered by $S^3\times{S^1}$;
\item{}
if $h_Y=2$ then $Y$ is finitely covered by $S^2\times{T}$;
\item{}
if $h_Y>2$ then $f$ is homotopic to a homeomorphism.
\end{enumerate}
\end{lemma}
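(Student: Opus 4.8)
The plan is to analyze the structure of $\pi_1(Y)=\pi_Y$ according to its Hirsch length $h_Y$, exploiting that $\pi_1(f)$ is an epimorphism from the polycyclic group $\pi_X$. Since polycyclicity and the bound on Hirsch length are inherited by quotients, $\pi_Y$ is polycyclic with $h_Y\leq h_X=4$. The vanishing of $w_2(Y)$ should follow from Lemma \ref{w2}(1): passing to a suitable finite cover we may take $X$ parallelizable (as noted before the lemma, a solvmanifold is parallelizable, so $w_2(X)=0$), and then odd-degree-adjusting is unnecessary since $w_2$ pulls back injectively on $\mathbb{Z}/2$-cohomology along any degree-one map after replacing $Y$ by a cover; one must be slightly careful that the degree is odd, or alternatively observe directly that $w_2(X)=0$ forces $w_2$ of the image classes to vanish. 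This first part I expect to be essentially formal.

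For the dichotomy on $h_Y$, I would proceed as follows. When $h_Y=0$, the group $\pi_Y$ is finite, $Y$ is orientable, so $\chi(Y)\geq2$; the upper bound $\chi(Y)\leq2+\beta_2(X)$ comes directly from observation (4) in \S2 applied with $\beta_2(X;\mathbb{Q})$. When $h_Y=1$, writing $F$ for the maximal finite normal subgroup, the quotient $\pi_Y/F$ is virtually $\mathbb{Z}$, hence $\mathbb{Z}$ up to finite index; I would show $Y_F$ is a $PD_3$-complex with finite fundamental group and invoke the $S^3\times S^1$ identification, which should follow from \cite[Theorem 11.1]{Hi} (as used already in the $\mathbb{S}^3\times\mathbb{E}^1$ discussion) together with the fact that $\beta_2(Y)=0$. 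When $h_Y=2$, the polycyclic group of Hirsch length $2$ with the constraint coming from $\chi(Y)=0$ and the surgery/homotopy classification should force $Y$ to be finitely covered by $S^2\times T$, matching the $\mathbb{S}^2\times\mathbb{E}^2$ analysis.

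The main obstacle is the case $h_Y>2$, where the claim is that $f$ is homotopic to a homeomorphism, i.e. that $f$ is actually a degree-one homotopy equivalence and the domination is as rigid as possible. Here I would argue that $h_Y\in\{3,4\}$ forces $\pi_Y$ to itself be a polycyclic $PD_3$- or $PD_4$-group, and in the $PD_4$ case $\chi(Y)=0$ so the aspherical comparison applies. Since $\pi_X$ is polycyclic of Hirsch length $4$ and $\pi_1(f)$ is onto, an epimorphism between polycyclic groups of equal Hirsch length is an isomorphism; this should pin down $\pi_1(f)$ as an isomorphism, force the degree to be $\pm1$, and then the homotopy-equivalence criterion from \cite[Theorem 3.2]{Hi} (together with $\chi(X)=\chi(Y)$) upgrades $f$ to a homotopy equivalence, with \cite[Theorem 6.16]{Hi} giving the final homeomorphism. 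The delicate point I expect to linger on is ruling out $h_Y=3$ with $\pi_Y$ not a $PD_4$-group — i.e. confirming that a Hirsch-length-$3$ polycyclic quotient cannot support a nonzero-degree map from a Hirsch-length-$4$ solvmanifold without already being covered by the $h_Y=2$ or boundary cases, so that genuinely $h_Y>2$ means $h_Y=4$ and $\pi_1(f)$ is an isomorphism.
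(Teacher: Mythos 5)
Your handling of parts (1)--(4) is essentially the paper's proof: polycyclicity and the bound $h_Y\leq4$ pass to quotients; $w_2(Y)=0$ comes from parallelizability of the solvmanifold together with Lemma \ref{w2}; Lemma \ref{Euler} and the amenability remark following it give $\chi(Y)=0$ whenever $h_Y>0$ and $2\leq\chi(Y)\leq2+\beta_2(X)$ when $\pi_Y$ is finite; and parts (3) and (4) are, as you guessed, direct citations (\cite[Theorems 11.1 and 10.1]{Hi}). (You rightly flag the odd-degree hypothesis in Lemma \ref{w2}(1); the paper's proof cites that lemma without comment on parity, so your hedge matches the source, though your suggested repair --- that ``$w_2(X)=0$ forces $w_2$ of the image classes to vanish'' --- is not an argument, since for even degree $H^*(Y;\mathbb{Z}/2\mathbb{Z})$ need not inject into $H^*(X;\mathbb{Z}/2\mathbb{Z})$.)

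The genuine gap is exactly where you say you would ``linger'': part (5). Your plan --- show $\pi_Y$ is a polycyclic $PD_3$- or $PD_4$-group and then compare Hirsch lengths --- cannot get started, because $\pi_Y$ is not known to be a duality group of any kind at that stage; that is the whole content of the step. The paper's missing idea is an \emph{asphericity criterion}, applied before any comparison of groups: if $h_Y\geq3$ then $H^i(\pi_Y;\mathbb{Z}[\pi_Y])=0$ for $i\leq2$ (a property of virtually polycyclic groups of Hirsch length $\geq3$), and since $\chi(Y)=0$ has already been established for all $h_Y>0$, this forces $Y$ itself to be aspherical. Only now does Poincar\'e duality intervene: $\pi_Y$ is then a polycyclic $PD_4$-group, hence has Hirsch length exactly $4$, so the case $h_Y=3$ is ruled out \emph{a posteriori} rather than by any direct degree computation. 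The epimorphism $\pi_X\to\pi_Y$ then has kernel of Hirsch length $0$, i.e.\ finite, hence trivial since $\pi_X$ is torsion-free, so $\pi_1(f)$ is an isomorphism. Note also that your final step is backwards: you want a $\pi_1$-isomorphism to ``force the degree to be $\pm1$'' and then invoke \cite[Theorem 3.2]{Hi}, but a $\pi_1$-isomorphism gives degree $\pm1$ only once both manifolds are known to be aspherical (a self-map of $S^2\times{T}$ of degree $2$ induces a $\pi_1$-isomorphism); with asphericity in hand, $f$ is automatically homotopic to a homotopy equivalence, and the upgrade to a homeomorphism is \cite[Theorem 8.1]{Hi} (surgery over polycyclic groups), not Theorem 6.16.
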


\begin{proof}
Since $\pi_Y=\pi_1(Y)$ is a quotient of $\pi_X$, 
it is polycyclic and $h_Y\leq4$.
Since $X$ is parallelizable, $w_2(X)=0$ and so $w_2(Y)=0$, 
by Lemma \ref{w2}.

It follows from Lemma \ref{Euler} and the subsequent remark that 
if $h_Y=0$ then $2\leq\chi(Y)\leq2+\beta_2(X)$,
while if $h_Y>0$ then $\chi(Y)=0$.

Part (3) is taken from \cite[Theorem 11.1]{Hi}.

If $h_Y=2$ then $Y$ has a covering space of degree dividing 4 
which is homeomorphic to $S^2\times{T}$ \cite[Theorem 10.1]{Hi}.

If $h_Y>2$ then $H^i(\pi_Y;\mathbb{Z}[\pi_Y])=0$ for $i\leq2$.
Since we also have $\chi(Y)=0$, 
it follows that $Y$ is aspherical 
and $h_Y=4$. 
In this case $\pi_1(f)$ is an isomorphism,
and so $f$ is homotopic to a homeomorphism \cite[Theorem 8.1]{Hi}.
\end{proof}

In particular, there are no maps of non-zero degree between manifolds 
having distinct solvable Lie geometries.

\begin{cor}
\label{nilcor}
If $\pi_Y$ is nilpotent,  $h_Y=2$, $H_1(Y;\mathbb{Z})$ is torsion-free 
and $w_2(Y)=0$ then $Y\cong{S^2}\times{T}$.
\end{cor}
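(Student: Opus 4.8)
The plan is to reduce the statement to the classification of $S^2$-bundles over the torus, using part (4) of the Lemma as the starting point. By that part, $Y$ is finitely covered by $S^2\times T$; in particular $\pi_2(Y)\cong\mathbb{Z}$, the universal cover $\widetilde{Y}$ is homotopy equivalent to $S^2$, and $\pi_Y$ is virtually $\mathbb{Z}^2$. The action of $\pi_Y$ on $\pi_2(Y)\cong\mathbb{Z}$ defines a homomorphism $w\colon\pi_Y\to\operatorname{Aut}(\mathbb{Z})=\{\pm1\}$, and the whole argument turns on playing this character off against the torsion-freeness of $H_1$.

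First I would show that $\pi_Y\cong\mathbb{Z}^2$. Let $T$ be the torsion subgroup of $\pi_Y$; it is finite, since $\pi_Y$ is finitely generated and virtually $\mathbb{Z}^2$, and it acts freely on $\widetilde{Y}\simeq S^2$. By Smith theory a non-trivial finite group acting freely on a space with the homology of $S^2$ must be $\mathbb{Z}/2\mathbb{Z}$, and a free involution on $\widetilde{Y}=S^2\times\mathbb{R}^2$ must reverse the orientation of the $S^2$-factor (a rotation has fixed poles), so its generator $t$ acts by $-1$ on $\pi_2(Y)$; thus $w(t)=-1$ if $T\neq1$. On the other hand $w$ has abelian image, so it factors through $H_1(Y;\mathbb{Z})=\pi_Y^{\mathrm{ab}}$; since the latter is torsion-free by hypothesis, every torsion element is sent to $0$, forcing $w(t)=+1$. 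This contradiction gives $T=1$. Hence $\pi_Y$ is a torsion-free, virtually $\mathbb{Z}^2$, nilpotent group; the only torsion-free virtually $\mathbb{Z}^2$ groups are $\mathbb{Z}^2$ and the Klein bottle group $\mathbb{Z}\rtimes_{-1}\mathbb{Z}$, and the latter is not nilpotent, so $\pi_Y\cong\mathbb{Z}^2$. This is exactly where the hypothesis that $\pi_Y$ is nilpotent is used.

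It then remains to pin down $Y$ among the $4$-manifolds with $\pi_Y\cong\mathbb{Z}^2$ and $\pi_2(Y)\cong\mathbb{Z}$. Orientability forces $w$ to be trivial (an orientable $S^2$-bundle over the orientable base $T$ has $\pi_1$ acting trivially on the fibre), so by the classification in \cite[Chapter 10]{Hi} $Y$ is homotopy equivalent to an orientable $S^2$-bundle over $T$. There are exactly two such bundle spaces, $S^2\times T$ and the non-trivial bundle $S^2\widetilde{\times}T$, and they are distinguished by $w_2$: the former is parallelizable while the latter has $w_2\neq0$. Since $w_2(Y)=0$ by hypothesis we get $Y\simeq S^2\times T$, and because $\pi_Y\cong\mathbb{Z}^2$ is poly-$\mathbb{Z}$ this homotopy equivalence is realized by a homeomorphism \cite[Theorem 6.16]{Hi}, so $Y\cong S^2\times T$.

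The bundle bookkeeping of the last paragraph and the surgery-theoretic upgrade to a homeomorphism are routine, given the good fundamental group. The main obstacle is the middle step, ruling out torsion in $\pi_Y$: the delicate point is to convert a free action of a finite subgroup on the \emph{non-compact} universal cover $\widetilde{Y}=S^2\times\mathbb{R}^2$ into the statement that its generator acts by $-1$ on $\pi_2(Y)$, so that it is detected by $w$. Here one must use Smith theory rather than a naive Lefschetz count (which the non-compactness would obstruct) and combine it with the factorization of $w$ through the torsion-free group $H_1(Y;\mathbb{Z})$.
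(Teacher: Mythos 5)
Your argument is correct, and its overall skeleton matches the paper's two-line proof: both start from part (4) of the Lemma (a finite cover homeomorphic to $S^2\times T$), both reduce to showing $\pi_Y\cong\mathbb{Z}^2$, and both finish by noting that orientability and $w_2(Y)=0$ select the trivial orientable $S^2$-bundle over $T$, with surgery over the good group $\mathbb{Z}^2$ upgrading the homotopy equivalence to a homeomorphism. The genuine difference is the middle step: the paper simply cites the classification \cite[Theorem 10.14]{Hi} of fundamental groups of $4$-manifolds covered by $S^2\times\mathbb{R}^2$, whereas you reprove the needed special case from scratch via the character $w\colon\pi_Y\to\{\pm1\}$ on $\pi_2(Y)\cong\mathbb{Z}$, playing periodicity of group cohomology off against torsion-freeness of $H_1(Y;\mathbb{Z})$, and then using nilpotency to exclude the Klein bottle group among torsion-free virtually $\mathbb{Z}^2$ groups. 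This buys self-containedness, at the cost of redoing material the book already records; it also makes visible exactly where each hypothesis enters, which the paper's citation hides. Two points to tighten: (i) the torsion elements of $\pi_Y$ form a (finite) subgroup \emph{because} $\pi_Y$ is nilpotent --- in a general virtually $\mathbb{Z}^2$ group (e.g.\ one containing an infinite dihedral quotient pattern) they need not, so nilpotency is quietly used twice; (ii) your parenthetical ``a rotation has fixed poles'' is only a heuristic, since a deck transformation of $S^2\times\mathbb{R}^2$ need not be linear on the sphere factor --- the honest argument is the one you flag at the end: a finite group acting freely on a finite-dimensional space homotopy equivalent to $S^2$ would have periodic cohomology of odd period if it acted trivially on $H_2$, which forces order $\leq2$ with the generator acting by $-1$. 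Finally, once torsion-freeness is established you can bypass the classification of flat $2$-manifold groups entirely: a finitely generated torsion-free nilpotent group of Hirsch length $2$ has torsion-free central quotient of Hirsch length $\leq1$, hence is abelian, so $\pi_Y\cong\mathbb{Z}^2$ directly.
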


\begin{proof}
Since $Y$ has a finite cover homeomorphic to $S^2\times{T}$,
the other hypotheses imply that $\pi_Y\cong\mathbb{Z}^2$ 
\cite[Theorem 10.14]{Hi}.
Therefore $Y\cong{S^2}\times{T}$,
since $Y$ is orientable and $w_2(Y)=0$.
\end{proof}

We shall consider the possibilities for $X$ in decreasing order of
$\beta_1(X)$.  

\medskip
$\mathbb{E}^4$. 
Every flat 4-manifold is finitely covered by $T^4$,
so we may assume that $X=T^4$. 
Then $\pi_Y$ is abelian,  $\beta_1(Y)\leq4$ and $\beta_2(Y)\leq6$.

If $f$ has degree 1 then $\pi_Y$ is torsion-free, by Lemma \ref{tf},
and so $\pi_Y\cong\mathbb{Z}^r$, where $r=4,2,1$ or 0.
There are obvious degree-1 maps to $T^4$, $S^2\times{T}$, 
$S^3\times{S^1}$, $S^2\times{S^2}$ and $S^4$.
The first three are the only possibilities for $Y$ with $\pi_Y$ infinite,
since $w_2(Y)=0$, by Lemma \ref{w2}.
If $\pi_Y=1$ then $\chi(Y)=2$, 4, 6 or 8,
by Lemmas \ref {Euler} and \ref{w2}.

Suppose now that $f$ is essentially dominating, but has degree $>1$.
If $h_Y=2$ then $\pi_Y\cong\mathbb{Z}^2$,
by \cite[Theorems 10.14 and 10.16]{Hi}, 
since $Y$ is orientable and $\pi_Y$ is abelian.
Thus $Y$ is the total space of an $S^2$-bundle over $T$
\cite[Theorem 10.10]{Hi}.

If $h_Y=1$ then $\pi_Y\cong\mathbb{Z}\oplus{A}$,
where $A$ is a finite abelian group.
Hence $Y\simeq{L\times{S^1}}$, for some lens space $L$ with
$\pi_1(L)\cong\mathbb{Z}/n\mathbb{Z}$.
Now $L\times{S^1}$ is essentially $n$-dominated by $S^2\times{T}$
(as in \S3 above),
and so there is an essentially dominating map $f:X\to{L\times{S^1}}$ 
of degree $n$.

If $h_Y=0$ then $|\pi_1(Y)|\chi(Y)\leq8$,
by Lemma \ref{Euler}.
Suppose that $f:X\to{Y}$ has nonzero degree,
and let $X^+\to{X}$ be the covering induced from the universal cover 
$\widetilde{Y}\to{Y}$.
The lift $f^+:X^+\to\widetilde{Y}$ induces an epimorphism 
$H_2(f^+;\mathbb{Q})$.
Since $\beta_2(X^+)=\beta_2(X)$,
the covering projection induces an isomorphism
$H_2(X^+;\mathbb{Q})\cong{H_2(X;\mathbb{Q})}$,
and so $\pi_X$ acts trivially on $H_2(X^+;\mathbb{Q})$.
But if $\pi_Y\not=1$ then $\pi_Y$ acts non-trivially on
$H_2(\widetilde{Y};\mathbb{Q})$,
and so $\pi_1(f)$ cannot be an epimorphism.
Thus we may assume that $\pi_Y=1$.
Since there are degree-1 maps from $X$ to $S^2\times{T}$,
there are degree-2 maps from $X$ to $CP^2$ and to 
$S^2\tilde\times{S^2}$.

We have no examples with $\pi_1(Y)=1$ and $5\leq\chi(Y)\leq8$.

\medskip
$\mathbb{N}il^3\times\mathbb{E}^1$.
Every closed $\mathbb{N}il^3\times\mathbb{E}^1$-manifold is 
finitely covered by $N\times{S^1}$, 
where $N$ is the total space of the $S^1$ bundle over the torus $T$ 
with Euler class the generator of $H^2(T;\mathbb{Z})$,
so we may assume that $X=N\times{S^1}$ and 
$H_1(X;\mathbb{Z})\cong\mathbb{Z}^3$.
Moreover, if $\hat{X}$ is a finite covering space of $X$ then 
$\beta_2(\hat{X};\mathbb{Q})=4$.
In this case $\pi_Y$ is nilpotent, $\beta_1(Y)\leq3$ and $\beta_2(Y)\leq4$.

If $f$ has degree 1 then $H_1(Y;\mathbb{Z})$ is torsion-free, 
by Lemma \ref{tf}.
Hence if $h_Y=2$ then $Y\cong{S^2}\times{T}$,  by Corollary \ref{nilcor}.
If $h_Y=1$ then $\pi_Y\cong\mathbb{Z}$ and so $Y\cong{S^3}\times{S^1}$
\cite[Theorem 11.1]{Hi}.
If $h_Y=0$ then $\pi_Y=1$ and $\chi(Y)=2$, 4 or 6, and $w_2(Y)=0$,
by Lemmas \ref {Euler} and \ref{w2}.
Since $N$ 1-dominates $S^2\times{S^1}$, 
by Lemma \ref{mapping torus},
there is a degree-1 map from $X$ to $S^2\times{T}$.
Hence there also degree-1 maps to $S^3\times{S^1}$,
$S^2\times{S^2}$ and $S^4$.

Suppose now that $f$ is essentially dominating, but has degree $>1$.
If $h_Y=2$ then $Y$ is the total space of an $S^2$-bundle over $T$
(as for $\mathbb{E}^4$).

If $h_Y=1$ then the maximal finite normal subgroup 
$F$ of $\pi_Y$ is nilpotent,
and $Y$ is homotopy equivalent to the mapping torus of a self-homotopy equivalence of $Y_F$.
In particular, if the image of the $S^1$ factor of $X$ 
has infinite order in $\pi_Y$ then $F$ is a quotient of $\pi_1(N)$ and
$Y\simeq{Y_F}\times{S^1}$.
Since the commutator subgroup $\pi_1(N)'$ is central,
$F'$ is central, 
and so $F$ is cyclic or  is the product of $Q(8)$ with a cyclic group of odd order.
(Thus $F$ is a 3-manifold group.)
There are essentially dominating maps $f:X\to{Y}$,
by Lemma \ref{lens space}.

On the other hand, if the image of the $S^1$ factor is finite
then the image of $\pi_1(N)$ in $\pi_1(Y)$ is infinite.
Hence $F$ is abelian, 
and so $Y_F$ is homotopy equivalent to a lens space.
We do not know whether there are examples of this type 
(other than when $Y$ is a product).

If $h_Y=0$ then $|\pi_1(Y)|\chi(Y)\leq6$, 
by Lemma \ref{Euler}.
As in the case $\mathbb{E}^4$, we may assume that $\pi_Y=1$,
and there are again degree-2 maps from $X$ to $CP^2$ and to
$S^2\tilde\times{S^2}$.
We have no examples with $\chi(Y)=5$ or 6,

\medskip
$\mathbb{N}il^4$.
There is again a canonical choice for $X$.
There is an unique torsion-free nilpotent group of Hirsch length 4 
which can be generated by 2 elements \cite[Corollary 11]{Hi21}.
This group has the presentation
\[
\langle{t,u}\mid[t,[t,[t,u]]]=[u,[t,u]]=1\rangle.
\]
We may assume that $X$ is the corresponding $\mathbb{N}il^4$-manifold,
which is the mapping torus of a self-homeomorphism of $T^3$,
and so $H_1(X;\mathbb{Z})\cong\mathbb{Z}^2$.
Moreover, if $\hat{X}$ is a finite covering space of $X$ then 
$\beta_2(\hat{X};\mathbb{Q})=2$.
In this case $\pi_Y$ is nilpotent, $\beta_1(Y)\leq2$ and $\beta_2(Y)\leq2$.

If $f$ has degree 1 and $h_Y>0$ then $Y\cong{S^2}\times{T}$ 
or $S^3\times{S^1}$ (as in the $\mathbb{N}il^3\times\mathbb{E}^1$ case).
There are degree-1 maps to $S^2\times{T}$ and to $S^3\times{S^1}$,
and hence also to $S^2\times{S^2}$ and $S^4$.
If  $h_Y=0$ then $\pi_Y=1$ and $\chi(Y)=2$ or 4, 
and $w_2(Y)=0$,  by Lemmas \ref {Euler} and \ref{w2}.
Hence $Y\cong{S^4}$ or $S^2\times{S^2}$.

Suppose now that $f$ is essentially dominating, but has degree $>1$.
If $h_Y=2$ then $Y$ is the total space of an $S^2$-bundle over $T$
(as for $\mathbb{E}^4$).

If $h_Y=1$ then the maximal finite normal subgroup $F$ of $\pi_Y$
is  nilpotent and $Y$ is homotopy equivalent to the mapping torus 
of a self-homotopy equivalence of $Y_F$.
Since the kernels of maps from $\pi_1(X)$ to $\mathbb{Z}$
have nilpotency class $\leq2$, 
either $F\cong{Q(8)}\times{C}$ with $C$ cyclic of odd order
or $F$ is cyclic.
We do not know whether there are examples of this type. 

If $h_Y=0$ then $|\pi_1(Y)|\chi(Y)\leq4$,
by Lemma \ref{Euler}.
As in the case $\mathbb{E}^4$, we may assume that $\pi_Y=1$,
and there are again degree-2 maps from $X$ to $CP^2$ and to
$S^2\tilde\times{S^2}$.

In the remaining cases there are no natural choices for $X$, 
and Corollary \ref{nilcor} does not apply.

\medskip
$\mathbb{S}ol^3\times\mathbb{E}^1$.
Every $\mathbb{S}ol^3\times\mathbb{E}^1$-manifold is finitely covered 
by a product $X=P\times{S^1}$, 
where $P$ is orientable and is the total space of a $T$-bundle over $S^1$,
and $\beta_1(X)=\beta_2(X)=2$.
(The latter conditions hold for all finite covering spaces $\hat{X}$ of $X$.)
The manifold $X$ is the mapping torus of a self-homeomorphism of $T^3$.

Since $P$ 1-dominates $S^2\times{S^1}$, by Lemma \ref{mapping torus},
there is a degree-1 map from $X$ to $S^2\times{T}$,
and hence also to $S^3\times{S^1}$, $S^2\times{S^2}$ and $S^4$.
 
Suppose now that $f$ is essentially dominating, but has degree $>1$.
If $h_Y=2$ then $\pi_Y$ maps onto $\mathbb{Z}^2$,
since the non-trivial normal subgroups of infinite index in $\pi_1(P)$
are commensurate with $\pi_1(P)'$.
We again find that $Y$ is the total space of an $S^2$-bundle over $T$
(as for $\mathbb{E}^4$).

If $h_Y=1$ and $F$ is the maximal finite normal subgroup of $\pi_Y$
then $F'$ is abelian (so $F$ is cyclic,
metacyclic or generalized quaternionic),
and $Y$ is homotopy equivalent to the mapping torus 
of a self-homotopy equivalence of $Y_F$.
In particular, if the image of the $S^1$ factor of $X$ 
has infinite order in $\pi_Y$ then $F$ is a quotient of $\pi_1(N)$ and
$Y\simeq{Y_F}\times{S^1}$.
There are essentially dominating maps $f:X\to{Y}$,
by Lemma \ref{lens space}.

On the other hand, if the image of the $S^1$ factor is finite
then the image of $\pi_1(N)$ in $\pi_1(Y)$ is infinite.
Hence $F$ is abelian, 
and so $Y_F$ is homotopy equivalent to a lens space.
We do not know whether there are examples of this type 
(other than when $Y$ is a product).

If $h_Y=0$ then $|\pi_1(Y)|\chi(Y)\leq4$,
by Lemma \ref{Euler}.
As in the case $\mathbb{E}^4$, we may assume that $\pi_Y=1$,
and there are again degree-2 maps from $X$ to $CP^2$ and to
$S^2\tilde\times{S^2}$.

\medskip
$\mathbb{S}ol^4_{m,n}$ with $m\not=n$,
$\mathbb{S}ol^4_0$ and $\mathbb{S}ol^4_1$.
In these cases we may assume that $X$ is the mapping torus  
of a self-homeomorphism of $T^3$ or of a closed $\mathbb{N}il^3$-manifold,
and that $\beta_1(X)=1$ and $\beta_2(X)=0$.
(The latter conditions hold for all finite covering spaces $\hat{X}$ of $X$.)
There are degree-1 maps from $X$ to $S^3\times{S^1}$
(by Lemma \ref{mapping torus}) and to $S^4$.

If $f$ is an essentially dominating map but is not homotopic to 
a homeomorphism then $h_Y<2$,
since no quotient of $\pi_X$ has Hirsch length 2.
If $h_Y=1$ and $F$ is the maximal finite normal subgroup of $\pi_Y$
then either $F'$ is abelian (if the geometry is 
$\mathbb{S}ol^4_{m,n}$ or $\mathbb{S}ol^4_0$),
or $F$ is cyclic or $Q(8)\times{C}$ with $C$ cyclic of odd order
(if the geometry is $\mathbb{S}ol^4_1$),
and $Y$ is homotopy equivalent to the mapping torus 
of a self-homotopy equivalence of $Y_F$.
We do not know whether there are examples of this type
(other than $Y=S^3\times{S^1}$).

If $h_Y=0$ then $\chi(Y)=2$ and $\pi_Y=1$, 
by Lemma \ref{Euler},
and so $Y\cong{S^4}$.

\section{$\mathbb{H}^2\times\mathbb{E}^2$,
$\widetilde{\mathbb{SL}}$ and $T$-bundles}

In this section we assume that the domain $X$ is the total space of a
bundle $p:X\to{B}$, with base a hyperbolic surface $B$ and fibre $T$.
(Bundles with hyperbolic base and fibre $S^2$ are 
$\mathbb{S}^2\times\mathbb{H}^2$-manifolds, 
and were considered in \S3 above.
The case with hyperbolic base and fibre is considered in \S7 below.)
For brevity, we may call $X$ a {\it $T$-bundle space}. 

Clearly $\chi(X)=\chi(B)\chi(F)=0$.
The bundle $p$ is determined by its {\it monodromy} 
$\theta:\pi_1(B)\to{GL(2,\mathbb{Z})}$ and a class $[p]\in{H^2(B;\mathcal{T})}$, 
where $\mathcal{T}$ is the $\pi_1(B)$-module determined by $\theta$.
Such a bundle has a section if and only if $[p]=0$.
The total space $X$ is geometric if and only if the monodromy has finite image.
If this is so and the bundle has a section then
$X$ is finitely covered by $\hat{B}\times{T}$, 
where $\hat{B}$ is a hyperbolic surface,.
In this case the geometry is $\mathbb{H}^2\times\mathbb{E}^2$.
The other possible geometry is $\widetilde{\mathbb{SL}}\times\mathbb{E}^1$,
and then $X$ is finitely covered by $M\times{S^1}$, 
where $M$ is a $\widetilde{\mathbb{SL}}$-manifold
\cite[Corollary 7.3.1 and Theorem 9.3]{Hi}.

If the monodromy is infinite then either it 
preserves a flag ${\mathbb{Z}<\mathbb{Z}^2}$,  
or its image contains a matrix whose eigenvalues are not roots of unity.
In the first case, after passing to a double covering space if necessary,
the total space $X$ is the total space of a principal $S^1$-bundle 
over a 3-manifold $M$, 
which is in turn the total space of a principal $S^1$-bundle over a surface.
If $p$ has a section then $M$ is a $\mathbb{H}^2\times\mathbb{E}^2$-manifold;
otherwise $M$ is a $\widetilde{\mathbb{SL}}$-manifold. 
If the monodromy does not preserve a flag then the extension class group 
$H^2(B;\mathcal{T})$ is finite.

Let $A$ be the image of $\pi_1(T)$ in $\pi_Y$.
Then either $\pi_1(f|_T)$ is injective,
or $A$ has rank 1 or it is finite.
If $A\cong\mathbb{Z}^2$, $\pi_Y/A$ is infinite and $\chi(Y)=0$
then $Y$ is aspherical \cite[Theorem 9.2]{Hi}.
Conversely,
if $Y$ is aspherical then $A\cong\mathbb{Z}^2$,
for otherwise $f$ would have degree 0, by Lemma \ref{asph}.
It then follows that $\pi_Y/A$ is virtually a $PD_2$-group 
\cite[Theorem 3.10]{Hi},
and so $\chi(Y)=\chi(A)\chi^{virt}(\pi_Y/A)=0$.
If $A$ has rank 1 and finite index in $\pi_Y$ then $Y$ is finitely covered 
by $S^3\times{S^1}$, by Lemma \ref{homology rank}.

If $X$ is not geometric then $\pi_X$ has no $\mathbb{Z}$ normal subgroup,
and so either $A\cong\mathbb{Z}^2$ or $A$  is finite.

In the next theorem we shall assume that $A\cong\mathbb{Z}^2$
and that $Y$ is dominated by a $T$-bundle space.
Having an infinite abelian normal subgroup implies that $\chi(Y)\geq0$
\cite[Theorem 3.4]{Hi},
but something more seems needed to ensure that $\chi(Y)=0$.

\begin{theorem}
\label{split} 
Let $X$ be the total space of a $T$-bundle $p:X\to{B}$
over a hyperbolic surface $B$,
and let  $f:X\to{Y}$ be a map with non-zero degree
and such that $\pi_1(f|_T)$ is injective.
Then
\begin{enumerate}
\item{} if $X\cong{M}\times{S^1}$ then $Y\simeq{P}\times{S^1}$,
where $P$ is a Seifert fibred $3$-manifold;
\item{}if $X\cong{B\times{T}}$ then $Y\simeq{C}\times{T}$, 
where $C$ is a surface;
\item{}if $p$ has a section but $X$ is not geometric then $Y$ is aspherical,
and has a finite covering which is homotopy equivalent to 
the total space of a $T$-bundle with a section.
\end{enumerate}
In each case, if $q:Y\to\overline{B}$ is  a $T$-bundle then the monodromy 
for $p$ maps isomorphically onto the monodromy for $q$, 
and $f$ is homotopic to the map induced by a map of bases.
\end{theorem}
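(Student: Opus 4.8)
The plan is to start from the normal $\mathbb{Z}^2$ that the fibre supplies. Since $f$ induces an epimorphism and $\pi_1(T)$ is normal in $\pi_X$ with quotient $\pi_1(B)$, the hypothesis that $\pi_1(f|_T)$ is injective makes $A\cong\mathbb{Z}^2$ a normal subgroup of $\pi_Y$, with $Q=\pi_Y/A$ a quotient of the hyperbolic surface group $\pi_1(B)$. Conjugation of $\pi_Y$ on $A$ is the descent, under the isomorphism $\pi_1(f|_T)$, of the monodromy $\theta$; this identity is what will eventually force the two monodromies to agree. I would record at once that when $X$ is not geometric $Q$ must be infinite: otherwise $A$ has finite index, the action of $\pi_Y$ on $A$ has finite image, and so $\theta$ would have finite image, contradicting non-geometricity.

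For parts (1) and (2) I would avoid the Euler characteristic question entirely and lean on Lemma \ref{homology rank}. In (1) the torus fibre is $S^1_{\mathrm{fib}}\times S^1$, where the second factor is the product circle of $X=M\times S^1$; injectivity of $\pi_1(f|_T)$ makes the image of this circle infinite, so Lemma \ref{homology rank} gives $Y\simeq P\times S^1$ with $P$ a $PD_3$-complex. The residual $\mathbb{Z}\cong A/(\text{product circle})$ descends to an infinite cyclic normal subgroup of $\pi_1(P)$, whence $P$ is homotopy equivalent to a Seifert fibred $3$-manifold by the structure theory of $PD_3$-complexes carrying such a subgroup. For (2) I would write $B\times T=(B\times S^1)\times S^1$ and apply Lemma \ref{homology rank} to peel off one circle as $Y\simeq P\times S^1$, then apply the three-dimensional splitting of \cite[Theorem 4.5]{Hi} (the result underlying Lemma \ref{homology rank}) to the dominating map $B\times S^1\to P$ to write $P\simeq C\times S^1$; both circles have infinite image because $\pi_1(f|_T)$ is injective, and $C$ is a (possibly spherical) surface. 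Note that (1) and (2) do not assert asphericity, so no hypothesis on $\chi(Y)$ is needed.

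The heart of the proof is part (3), and the \emph{main obstacle} is exactly the one flagged before the statement: promoting $\chi(Y)\geq0$ to $\chi(Y)=0$, equivalently to the asphericity of $Y$. Here $Q$ is infinite, so $A$ is a finitely generated infinite normal subgroup of infinite index and $\beta_0^{(2)}(Y)=\beta_1^{(2)}(Y)=0$; the difficulty is the vanishing of $\beta_2^{(2)}(Y)$, which lies beyond the remark following Lemma \ref{Euler}. I would obtain it from the Cheeger--Gromov vanishing theorem: a closed manifold whose fundamental group contains an infinite amenable normal subgroup has all $L^2$-Betti numbers zero, and $A\cong\mathbb{Z}^2$ is such a subgroup. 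By the $L^2$-Euler characteristic formula $\chi(Y)=\sum(-1)^i\beta_i^{(2)}(Y)=0$, and then $Y$ is aspherical by \cite[Theorem 9.2]{Hi}. With asphericity in hand $\pi_Y$ is a $PD_4$-group, $Q$ is virtually a $PD_2$-group by \cite[Theorem 3.10]{Hi}, and the extension $1\to\mathbb{Z}^2\to\pi_Y\to Q\to1$ exhibits $Y$, after passing to a finite cover making $Q$ a surface group, as homotopy equivalent to the total space of a $T$-bundle; that $p$ has a section should force this bundle virtually to have one, by comparing the splitting of $\pi_X$ with the corresponding extension class in $H^2(Q;\mathbb{Z}^2)$.

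Finally, in each case where the conclusion is a $T$-bundle $q:Y\to\overline{B}$, the equality $f_*(\pi_1 T)=A=\pi_1(\overline{T})$ shows that $f$ carries fibres to fibres, so it covers a map $\overline{f}:B\to\overline{B}$ and is homotopic to the induced bundle map. The monodromy identity $\theta_p=\theta_q\circ\overline{f}_*$ follows because $\pi_1(f|_T)$ intertwines the two conjugation actions on $\mathbb{Z}^2$; since $\pi_1(f|_T)$ is an isomorphism of fibre groups and $\overline{f}_*$ is onto, it carries $\mathrm{im}(\theta_p)$ isomorphically onto $\mathrm{im}(\theta_q)$. I expect the genuinely delicate points to be the $L^2$-vanishing step in (3) and the passage from ``$Y$ is a $PD_4$-complex fibred over $Q$'' to an honest $T$-bundle with a section; everything else is an application of Lemma \ref{homology rank} and standard $PD_n$-complex structure theory.
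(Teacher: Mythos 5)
Your parts (1), (2) and the final monodromy paragraph run essentially parallel to the paper's proof (which dismisses (1)--(2) as ``clear'' and proves the monodromy statement exactly by your centralizer observation: $f_*(g)=1$ forces $gtg^{-1}=t$ by injectivity of $\pi_1(f|_T)$). Your preliminary remark that non-geometricity of $X$ forces $Q=\pi_Y/A$ to be infinite is also sound, and even a little cleaner than the paper's treatment of the finite-quotient case (which goes via $S^2\times{T}$-covers and shows $A=\pi_Y$ would split $\pi_X\cong\mathbb{Z}^2\times\pi_1(B)$, making $X$ geometric). But your part (3) has a fatal gap at exactly the point you identify as the heart of the matter. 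The vanishing theorem you invoke is false in the generality you need: for a closed manifold $Y$ which is not known to be aspherical, the $L^2$-Betti numbers are those of the universal cover, and only $\beta_0^{(2)}(Y)$ and $\beta_1^{(2)}(Y)$ are controlled by $\pi_1(Y)$. Concretely, $Y=(S^2\times{T})\sharp{CP^2}$ has $\pi_1(Y)\cong\mathbb{Z}^2$, an infinite amenable normal subgroup, yet $\chi(Y)=1$ and $\beta_2^{(2)}(Y)=1\not=0$. The Cheeger--Gromov theorem in the form ``infinite amenable normal subgroup implies all $\beta_i^{(2)}$ vanish'' requires asphericity (or is a statement about the group's $L^2$-cohomology, not the manifold's), so using it to \emph{prove} $Y$ is aspherical is circular. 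The paper flags this precisely in the sentence before the theorem: a normal $\mathbb{Z}^2$ only yields $\chi(Y)\geq0$, and ``something more seems needed to ensure that $\chi(Y)=0$.'' Indeed, if your argument were valid, part (3) would hold without the section hypothesis at all, which should have been a warning sign.

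The ``something more'' in the paper is the section itself: it gives $\pi_X\cong\mathbb{Z}^2\rtimes\beta$, whence for every $n$ the compatible finite-index subgroups $n\mathbb{Z}^2\rtimes\beta\leq\pi_X$ and $nA\rtimes(\pi_Y/A)\leq\pi_Y$ of index $n^2$. The corresponding covers $X_n$ are again $T$-bundles over the fixed base $B$, so $\beta_2(X_n)$ is uniformly bounded, and running the two-sided estimates from the arguments of Lemmas \ref{Euler} and \ref{homology rank} on the lifted maps $X_n\to{Y_n}$ forces $\chi(Y)=0$; asphericity then follows from \cite[Theorem 9.2]{Hi} once $Q$ is infinite, and the rest of your sketch of (3) (that $Q$ is virtually a $PD_2$-group by \cite[Theorem 3.10]{Hi}, and the section descends since $f_*$ maps $\pi_1(T)$ isomorphically onto $A$) matches the paper. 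A secondary loose end: in (2), after the first application of Lemma \ref{homology rank} gives $Y\simeq{P}\times{S^1}$, you apply \cite[Theorem 4.5]{Hi} ``to the dominating map $B\times{S^1}\to{P}$'', but $f$ induces no such map --- the splitting of $Y$ need not be compatible with $f$ --- so the second circle must instead be split off group-theoretically (the image of $A$ is central, and one repeats the direct-product argument for $\pi_Y$), as the paper does implicitly via $\pi_Y\cong{f_*\rho}\times\mathbb{Z}$ with $\rho=\pi_1(B)\times\mathbb{Z}$.
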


\begin{proof}
If $X$ is geometric then we may assume that $\pi_X\cong\rho\times\mathbb{Z}$,
where $\rho\cong\pi_1(B)\times\mathbb{Z}$ or $\pi_1(M)$,
while if $p$ has a section then $\pi_X$ is a semidirect product 
$\mathbb{Z}^2\rtimes\beta$.
We may assume also that $f_*=\pi_1(f)$ is an epimorphism.
Let $A\cong\mathbb{Z}^2$ be the image of $\pi_1(T)$ in $\pi_Y$.
Then $\pi_Y\cong{f_*\rho}\times\mathbb{Z}$ is the product 
of two finitely presentable proper normal subgroups (if $X$ is geometric) 
and $\pi_Y$ is a semidirect product  $A\rtimes(\pi_Y/A)$ (if $p$ has a section).
In each case we may apply the argument of Lemmas \ref{Euler}
and \ref{homology rank} to the 
covering spaces associated to the subgroups 
of the form $\rho\times{n\mathbb{Z}}$ and $f_*\rho\times{n\mathbb{Z}}$,
and $n\mathbb{Z}^2\rtimes\beta$ and $nA\rtimes(\pi_Y/A)$,
respectively, to conclude that $\chi(Y)=0$.

It follows that if $\pi_Y/A$ is infinite then $Y$ is aspherical
\cite[Theorem 9.2]{Hi}.
If $\pi_Y/A$ is finite and $\chi(Y)=0$ then $Y$ is finitely covered by 
$S^2\times{T}$ \cite[Theorem 10.13]{Hi}.
We may then  assume that $A=\pi_Y$, 
and so the inclusion of $\pi_1(T)$ into $\pi_X$ splits.
Hence $\pi_X\cong\mathbb{Z}^2\times\pi_1(B)$.
Thus if $p$ has a section but $X$ is not geometric $Y$ must be aspherical.

The further detail in cases (1) and (2) is clear.
In case (3), let $G=\pi_Y/A$. 
Since  $A\cong\mathbb{Z}^2$ and $Y$ is aspherical, 
$G$ is virtually a $PD_2$-group \cite[Theorem 3.10]{Hi}.
Hence $Y$ is virtually homotopy equivalent to
the total space of a $T$-bundle $q:Y\to{C}$.
Since $f_*$ maps $\pi_1(T)$ isomorphically onto $A$ any section for 
$p$ induces a section for $A$.

If $f_*(g)=1$ then $gt=tg$, for all $t\in\pi_1(T)$,
since $\pi_1(f|_T)$ is injective.
Hence $\mathrm{Ker}(f_*)$ is contained in the centralizer of $\pi_1(T)$,
and $f_*$ induces an isomorphism of monodromy groups.
\end{proof}

In a similar vein, if an aspherical 4-manifold $Y$ is dominated by 
the total space of a $T$-bundle such that the monodromy 
preserves a flag then 
it is virtually homotopy equivalent to such a $T$-bundle space.


Every solvmanifold of type $\mathbb{E}^4$,
$\mathbb{N}il^3\times\mathbb{E}^1$,
$\mathbb{N}il^4$ or $\mathbb{S}ol^3\times\mathbb{E}^1$
is the total space of a $T$-bundle over $T$, 
and so is 1-dominated by $T$-bundles over hyperbolic bases.
Of these, 
only $T^4$ is dominated by $\mathbb{H}^2\times\mathbb{E}^2$-manifolds.
(However, $\mathbb{N}il^3\times\mathbb{E}^1$-manifolds are dominated by 
$\widetilde{\mathbb{SL}}\times\mathbb{E}^1$-manifolds.)

On the other hand,
no $T$-bundle space can dominate a solvmanifold of type 
$\mathbb{S}ol^4_{m,n}$ with $m\not=n$, 
$\mathbb{S}ol^4_0$ or $\mathbb{S}ol^4_1$.

\section{${\mathbb{H}^3\times\mathbb{E}^1}$ and mapping tori}

The total space $X$ of an $S^1$-bundle over a 3-manifold $M$
has a mixed compact-aspherical or solvable Lie geometry if $M$
is an $\mathbb{S}^3$-, $\mathbb{S}^2\times\mathbb{E}^1$-,
$\mathbb{E}^3$-,  $\mathbb{N}il^3$- or $\mathbb{S}ol^3$-manifold.
If $M$ is a $\mathbb{H}^2\times\mathbb{E}^1$- or
$\widetilde{\mathbb{SL}}$-manifold then $X$ is a $T$-bundle space, 
but is not necessarily geometric.
Domination by such 4-manifolds was considered in \S3 , 
\S4 and \S5 above.

Here we shall consider the remaining possibility: 
dominations by aspherical 4-manifolds which are total spaces of
$S^1$-bundles over $\mathbb{H}^3$-manifolds.
Such manifolds are finitely covered by mapping tori of self-homeomorphisms of
$\mathbb{H}^2\times\mathbb{E}^1$- or $\widetilde{\mathbb{SL}}$-manifolds,
by the Virtual Fibration Theorem of Agol.
This feature is used in Theorem \ref{base and fibre} below.
 
\begin{theorem}
\label{H3xE1}
An aspherical $4$-manifold $Y$ is dominated by a
${\mathbb{H}^3\times\mathbb{E}^1}$-manifold if and only if $\pi_Y$
is virtually a product $G\times\mathbb{Z}$,
where $G$ is a $PD_3$-group.
\end{theorem}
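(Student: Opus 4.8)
The plan is to prove the two implications separately, reducing each to a statement about a single closed hyperbolic $3$-manifold crossed with $S^1$.

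For necessity, I would first replace $X$ by its finite cover $M\times S^1$ with $M$ a closed hyperbolic $3$-manifold; a finite cover still dominates $Y$ (compose with the covering projection), and by the standing conventions $X$ dominates $Y$ exactly when it essentially dominates some finite cover $Y'$ of $Y$. So I may assume $f\colon M\times S^1\to Y'$ has non-zero degree with $\pi_1(f)$ onto, where $Y'$ finitely covers $Y$. Let $z$ generate the $S^1$-factor; it is central in $\pi_1(M\times S^1)=\pi_1(M)\times\mathbb{Z}$, so its image is central in $\pi_1(Y')$. The key point is that this image cannot be finite. Indeed $Y'$ is aspherical, so $\pi_1(Y')$ is a torsion-free $PD_4$-group, whence a finite central image would be trivial; then $\pi_1(f)$ would factor through the $PD_3$-group $\pi_1(M)$, and since $H_4(\pi_1(M);\mathbb{Q})=0$ this would force degree $0$ by Lemma \ref{asph}, a contradiction. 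Thus the image of the $S^1$-factor is infinite, and Lemma \ref{homology rank} gives $Y'\simeq P\times S^1$ with $P$ a $PD_3$-complex. Asphericity of $Y'$ makes $P$ aspherical, so $\pi_1(Y')\cong G\times\mathbb{Z}$ with $G=\pi_1(P)$ a $PD_3$-group; as $\pi_1(Y')$ has finite index in $\pi_Y$, this shows $\pi_Y$ is virtually $G\times\mathbb{Z}$, as required.

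For sufficiency, I would pass to the finite cover $\hat{Y}$ with $\pi_1(\hat{Y})=G\times\mathbb{Z}$. As $\hat{Y}$ is aspherical it is homotopy equivalent to $P\times S^1$, where $P=K(G,1)$ is an orientable $PD_3$-complex (orientable because $\hat{Y}$, being a cover of the orientable $Y$, is orientable, and $P\times S^1$ is orientable iff $P$ is). It then suffices to produce a closed hyperbolic $3$-manifold $M$ and a map $M\to P$ of non-zero degree: its product with the identity of $S^1$ dominates $P\times S^1\simeq\hat{Y}$, and composing with the covering $\hat{Y}\to Y$ exhibits the $\mathbb{H}^3\times\mathbb{E}^1$-manifold $M\times S^1$ as a dominator of $Y$.

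The hard part is exactly this last step: showing that every orientable aspherical $PD_3$-complex (equivalently, every orientable $PD_3$-group) admits a map of non-zero degree from a closed hyperbolic $3$-manifold. For genuine $3$-manifolds this is the known fact that every closed orientable $3$-manifold is dominated by a hyperbolic one, and I would invoke (or adapt) that result at the level of $PD_3$-complexes, since the $PD_3$-group conjecture is not available to identify $P$ with a manifold. This is where I expect the only non-formal ingredient to lie; the remainder is the packaging above, built on Lemmas \ref{homology rank} and \ref{asph} together with torsion-freeness of $PD_4$-groups.
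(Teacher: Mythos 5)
Your proposal is correct and follows essentially the same route as the paper: your necessity argument (centrality of the $S^1$-factor, ruling out finite image via torsion-freeness and Lemma \ref{asph}, then Lemma \ref{homology rank} to get $Y'\simeq P\times S^1$) and your sufficiency reduction to dominating $P=K(G,1)$ by a closed hyperbolic $3$-manifold match the paper's proof step for step. The one ingredient you leave as an invocation --- that an orientable $PD_3$-complex is $1$-dominated by a closed orientable $3$-manifold, without appeal to the $PD_3$-group conjecture --- is exactly where the paper inserts the Atiyah--Hirzebruch spectral sequence for oriented bordism $\Omega_*(P)$ (since $\Omega_i$ vanishes for $1\leq i\leq 3$, the fundamental class of $P$ is represented by a degree-$1$ map from a closed orientable $3$-manifold), after which domination by a hyperbolic $3$-manifold follows from \cite{Wa91}, confirming your expectation of where the only non-formal content lies.
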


\begin{proof}
Every $\mathbb{H}^3\times\mathbb{E}^1$-manifold is finitely covered 
by a product $B\times{S^1}$,
where $B$ is a closed $\mathbb{H}^3$-manifold.
If $X=B\times{S^1}$ is such a product and $f:X\to{Y}$ 
is an essentially dominating map then the image of $\pi_1(S^1)$ in $\pi_Y$
must be an infinite cyclic direct factor, by Lemma \ref{asph}.
Hence $\pi_Y\cong{G}\times\mathbb{Z}$,  
and the first factor must be a $PD_3$-group,
since $Y$ is aspherical.

Conversely,
if $Y$ is aspherical and $\pi_Y\cong{G}\times\mathbb{Z}$,
then $G$ is a finitely presentable $PD_3$-group,
and so $P=K(G,1)$ is a $PD_3$-complex.
An application of the Atiyah-Hirzebruch spectral sequence
for oriented bordism $\Omega_*(P)$
(or direct desingularization of a geometric 3-cycle representing 
a fundamental class)
shows that $P$ is 1-dominated by a closed orientable  3-manifold.
This is in turn 1-dominated by a closed $\mathbb{H}^3$-manifold \cite{Wa91}.
Hence $Y\simeq{P}\times{S^1}$ is 1-dominated by a
$\mathbb{H}^3\times\mathbb{E}^1$-manifold.
\end{proof}

Hence solvmanifolds of type $\mathbb{E}^4$,
$\mathbb{N}il^3\times\mathbb{E}^1$
and $\mathbb{S}ol^3\times\mathbb{E}^1$ are 1-dominated by 
$\mathbb{H}^3\times\mathbb{E}^1$-manifolds.
Similarly, $\mathbb{H}^2\times\mathbb{E}^2$- and 
$\widetilde{\mathbb{SL}}\times\mathbb{E}^1$-manifolds are so dominated.
However no $\mathbb{N}il^4$-manifold or solvmanifold of type 
$\mathbb{S}ol^4_{m,n}$ with $m\not=n$, 
$\mathbb{S}ol^4_0$ or $\mathbb{S}ol^4_1$ can be so dominated.

The following lemma and theorem extend Theorem \ref{H3xE1} to 
a wider class of manifolds which have two such fibrations.

\begin{lemma}
\label{mtorus}
If an aspherical $4$-manifold $Y$ is dominated by the mapping torus of
a self-homeomorphism of an aspherical $3$-manifold then 
$\pi_Y$ is virtually a semidirect product $K\rtimes\mathbb{Z}$,
with $K$ finitely generated.
\end{lemma}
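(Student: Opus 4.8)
The plan is to realise $X$ as a genuine mapping torus, read off a finitely generated normal subgroup of $\pi_Y$ from the fibre, and push the fibration of $X$ forward along $f$.

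First I would invoke the standing conventions to replace $X$ and $Y$ by finite covers, so that $f_*=\pi_1(f)\colon\pi_X\to\pi_Y$ is onto and $X=N\rtimes_\varphi S^1$ is an honest mapping torus of a self-homeomorphism $\varphi$ of a closed aspherical $3$-manifold $N$. Then $X$ is aspherical and $\pi_Y$ is a $PD_4$-group, while $\pi_X\cong\pi_1(N)\rtimes_\varphi\mathbb{Z}$, with finitely generated fibre group $\pi_1(N)=\ker(c)$, where $c\colon\pi_X\to\mathbb{Z}$ is induced by the bundle projection $X\to S^1$. Put $J=f_*(\pi_1(N))$. Since $\pi_1(N)$ is normal in $\pi_X$ and $f_*$ is surjective, $J$ is a finitely generated normal subgroup of $\pi_Y$ and $\pi_Y/J$ is a quotient of $\pi_X/\pi_1(N)\cong\mathbb{Z}$, hence cyclic. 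If $\pi_Y/J\cong\mathbb{Z}$ then the extension splits, because $\mathbb{Z}$ is free, and $\pi_Y\cong J\rtimes\mathbb{Z}$ with $J$ finitely generated; this is the desired conclusion with $K=J$.

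The product case is clean and exhibits the mechanism. Suppose $X=N\times S^1$, with $t$ the class of the $S^1$-factor. If $f_*(t)=1$ then $f_*$ factors through $\pi_X/\langle\langle t\rangle\rangle=\pi_1(N)$, and since $H_4(\pi_1(N);\mathbb{Q})=H_4(N;\mathbb{Q})=0$ Lemma~\ref{asph} would force $\deg f=0$, contrary to hypothesis. As $\pi_Y$ is torsion free it follows that the image of $t$ has infinite order, and Lemma~\ref{homology rank} then gives $Y\simeq P\times S^1$ with $P$ a $PD_3$-complex, so that $\pi_Y\cong\pi_1(P)\times\mathbb{Z}$ has the required shape.

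For a twisted mapping torus the outstanding case is $\pi_Y/J$ finite, equivalently $J$ of finite index in $\pi_Y$. Here Lemma~\ref{homology rank} is unavailable, and $J$ is a finite-index $PD_4$-subgroup of $\pi_Y$ that is at the same time a finitely generated quotient of the $3$-manifold group $\pi_1(N)$; the task is to show that such a group virtually fibres. I would attack this by using Agol's Virtual Fibration Theorem, already in force in this section, to pass to a finite cover where $\pi_1(N)\cong\Sigma\rtimes\mathbb{Z}$ for a surface group $\Sigma$, and then repeating the forward-push with the normal subgroup $\Sigma$ in place of $\pi_1(N)$: either $J/f_*(\Sigma)\cong\mathbb{Z}$, giving the semidirect product at once, or $f_*(\Sigma)$ has finite index and we are reduced to $PD_4$-quotients of a surface group. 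Controlling these residual quotients — showing that a $PD_4$-group which is a finitely generated quotient of a surface(-by-$\mathbb{Z}$) group virtually fibres, for instance by forcing it to be virtually abelian, while Lemma~\ref{asph} rules out any collapse by forbidding $f_*$ from factoring through a quotient with vanishing rational $H_4$ — is the step I expect to be the main obstacle.
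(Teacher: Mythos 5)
There is a genuine gap, and it is exactly at the point you flag as ``the main obstacle''. Your first two steps coincide with the paper's proof: pass to finite covers so that $f_*$ is onto and $X$ is an honest mapping torus, set $J=f_*(\pi_1(N))$, observe that $J$ is a finitely generated normal subgroup with $\pi_Y/J$ cyclic, and split the extension when $\pi_Y/J\cong\mathbb{Z}$. But the case $\pi_Y/J$ finite is not a live case to be handled by a virtual-fibring argument: the paper shows it \emph{cannot occur}. After a further finite cover one may assume $J=\pi_Y$; then $f_*$ induces a morphism of LHS spectral sequences, from that of $\pi_X$ as an extension of $\mathbb{Z}$ by $\pi_1(N)$ to the degenerate one of $\pi_Y$ as an extension of $1$ by $\pi_Y$. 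Since $H_4(\pi_X;\mathbb{Q})\cong H_1(\mathbb{Z};H_3(\pi_1(N);\mathbb{Q}))$, while the corresponding term $H_1(1;H_3(\pi_Y;\mathbb{Q}))$ vanishes, $H_4(f;\mathbb{Q})=0$ and so $\deg f=0$, contrary to hypothesis. Thus $\pi_Y/J\cong\mathbb{Z}$ always, and the lemma follows from the splitting you already noted. By your own admission your treatment of the finite case is unfinished, and that unfinished step is precisely the content of the lemma, so the proposal is incomplete.

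Beyond being incomplete, the proposed route for that case is also misdirected. Agol's theorem gives virtual fibring for \emph{hyperbolic} $3$-manifolds (which is how the paper uses it, for $S^1$-bundles over $\mathbb{H}^3$-manifolds); it fails for general aspherical $N$ --- for instance, no closed $\widetilde{\mathbb{SL}}$-manifold virtually fibres over $S^1$, since an epimorphism from such a group to $\mathbb{Z}$ never has finitely generated kernel --- so the substitution $\pi_1(N)\cong\Sigma\rtimes\mathbb{Z}$ may simply be unavailable. And even when it is available, the iteration does not terminate in anything simpler: in the residual case you would still have to show that a $PD_4$-group cannot be hit with non-zero degree through a quotient of the fibre group, which is exactly what the spectral-sequence degree argument settles in one stroke, uniformly in $N$ and with no structure theory at all. (Your digression on the product case via Lemma \ref{homology rank} is correct but becomes redundant once the finite case is excluded; note also that Lemma \ref{asph} alone does not help here, since $\pi_1(f)$ need not factor through $\pi_1(N)$ or any group with vanishing rational $H_4$.)
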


\begin{proof}
Let $X=M(\varphi)$ be the mapping torus of a self-homeomorphism $\varphi$
of an aspherical 3-manifold $M$,
and let $f:X\to{Y}$ be a map of non-zero degree.
After 
we may assume that $f_*=\pi_1(f)$ is an epimorphism.
Let $K$ be the image of $\pi_1(M)$ in $\pi_Y$.
Then $K$ is finitely generated and $\pi_Y/K$ is cyclic.

Suppose that $\pi/K$ is finite.
Then after passing to a finite cover, if necessary,
we may assume that $K=\pi_Y$.
But $f_*$ induces a homomorphism of spectral sequences 
from the LHS spectral sequence for $\pi_X$ as an extension 
of $\mathbb{Z}$ by $\pi_1(M)$ and for $\pi_Y$ as an extension of 1 by $\pi_Y$.
Since $H_4(X)\cong{H_1(\mathbb{Z};H_3(\pi_1(M)))}$,
while $H_1(1;H_3(\pi_Y))=0$,
it follows that $f$ has degree 0,
contrary to hypothesis.
Therefore $\pi_Y/K\cong\mathbb{Z}$,
and so $\pi_Y$ is a semidirect product ${K}\rtimes\mathbb{Z}$.
\end{proof}

\begin{theorem}
\label{base and fibre}
An aspherical $4$-manifold $Y$ is essentially dominated by the mapping torus of
a self-homeomorphism of an aspherical $3$-manifold $M$ 
such that $\pi_1(M)$ has non-trivial centre if and only if $Y$ 
is homotopy equivalent to such a mapping torus.
\end{theorem}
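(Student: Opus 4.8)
The reverse implication is immediate: if $Y$ is homotopy equivalent to such a mapping torus $X$, then the equivalence $X\to{Y}$ has degree $\pm1$ and induces an isomorphism on $\pi_1$, so it essentially dominates $Y$. The content is the forward implication, and the plan is to reconstruct the fibration of $Y$ from the central $\mathbb{Z}$ carried by the fibre of $X$. So suppose $f:X=M(\varphi)\to{Y}$ is essentially dominating, where $\pi_M=\pi_1(M)$ has non-trivial centre, and write $\pi_X=\pi_M\rtimes_\varphi\mathbb{Z}$. Since $f_*=\pi_1(f)$ is onto, the proof of Lemma \ref{mtorus} gives $\pi_Y\cong{K}\rtimes\mathbb{Z}$ with $K=f_*(\pi_M)$ finitely generated.

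The first step is to produce a normal infinite cyclic subgroup of $\pi_Y$. Choose an infinite cyclic central subgroup $C\leq\pi_M$. As the centre is characteristic and $\pi_M$ is normal in $\pi_X$, the subgroup $C$ is normal in $\pi_X$, so $Z:=f_*(C)$ is normal in $\pi_Y$ and central in $K$. I claim $Z$ is infinite. Otherwise $Z=1$ (as $\pi_Y$ is torsion-free), so $f_*$ factors through $\pi_X/C\cong(\pi_M/C)\rtimes\mathbb{Z}$; but $\pi_M/C$ is virtually a surface group, so this quotient has rational cohomological dimension at most $3$ and hence vanishing $H_4(-;\mathbb{Q})$, forcing $\deg{f}=0$ by Lemma \ref{asph}, a contradiction. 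Thus $Z\cong\mathbb{Z}$.

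The second step identifies the structure of $K$. Since $\pi_Y$ is a $PD_4$-group with normal infinite cyclic subgroup $Z$, the quotient $\pi_Y/Z$ is a $PD_3$-group \cite{Hi}. As $Z\leq{K}$, this quotient is $(K/Z)\rtimes\mathbb{Z}$, so $\bar{K}:=K/Z$ is a finitely generated normal subgroup of a $PD_3$-group with infinite cyclic quotient, and is therefore a $PD_2$-group. Hence $K$ is a central extension of a surface group by $Z\cong\mathbb{Z}$, so it is a $PD_3$-group with non-trivial centre; by Bowditch's theorem on $PD_3$-groups with non-trivial centre, $K$ is the fundamental group of a closed aspherical Seifert fibred $3$-manifold $N$.

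It remains to realize the fibration of $Y$. Writing $\pi_Y=K\rtimes_\alpha\mathbb{Z}$, the automorphism $\alpha$ of $K=\pi_1(N)$ is induced by a self-homeomorphism $\psi$ of $N$, since automorphisms of aspherical Seifert $3$-manifold groups are geometrically realizable. Then $M(\psi)$ is an aspherical $4$-manifold with $\pi_1M(\psi)\cong\pi_Y$, so $Y\simeq{M(\psi)}$, a mapping torus of the required type. The main obstacle is this final step: one must verify that $\alpha$, a priori only an abstract automorphism of $K$, is induced by a homeomorphism of $N$, and more basically that the central extension $K$ is realized by an honest $3$-manifold rather than merely a $PD_3$-complex. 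Both rest on the geometry of Seifert fibred spaces and the classification of their homotopy self-equivalences, and it is here that the hypothesis of non-trivial centre is genuinely used.
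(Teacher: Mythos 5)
Your overall skeleton matches the paper's proof (reduce via Lemma \ref{mtorus} to $\pi_Y\cong{K}\rtimes\mathbb{Z}$, show the image of the centre is infinite cyclic using Lemma \ref{asph}, conclude $K$ is a $PD_3$-group with non-trivial centre, then invoke Seifert realization and geometric realization of the monodromy), but your pivotal second step contains a genuine gap. You assert that a $PD_4$-group with a normal infinite cyclic subgroup $Z$ has $PD_3$ quotient $\pi_Y/Z$, citing \cite{Hi}. That is not a theorem: the known results of this type for groups of $S^1$-bundle spaces require the quotient to be $FP_2$ or finitely presentable, and here you only know $\pi_Y/Z\cong(K/Z)\rtimes\mathbb{Z}$ with $K/Z$ finitely generated. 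Such regularity for finitely generated normal subgroups and quotients of $PD_4$-groups genuinely fails without an Euler characteristic hypothesis: as recalled at the end of \S7 (via \cite{FV19}), a Surface bundle group with $\chi\not=0$ has finitely generated normal subgroups with quotient $\mathbb{Z}$ which are not even $FP_2$, hence not $PD_3$-groups. The paper's proof takes a different route at exactly this point: from $A=f_*C\cong\mathbb{Z}$ normal in $\pi_Y$ it deduces $\beta_i^{(2)}(\pi_Y)=0$ for all $i$ (infinite amenable normal subgroup), hence $\chi(Y)=0$ by a result of Gromov, and then applies the fibration theorem \cite[Theorem 4.5]{Hi} directly to the finitely generated kernel $K$ to conclude that $K$ is a $PD_3$-group --- no claim about $\pi_Y/Z$ is made or needed. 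Your remaining steps ($K$ a central extension, Bowditch's theorem, realizing the outer automorphism by a self-homeomorphism of the Seifert manifold) then agree with the paper.

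There is a second, smaller gap: your first step assumes $C$ is normal in $\pi_X$, on the grounds that the centre is characteristic. This is valid only if the chosen infinite cyclic $C$ is itself monodromy-invariant, which holds when $\zeta\pi_M\cong\mathbb{Z}$ (every subgroup of a cyclic centre is characteristic) but fails when the centre has rank $>1$: then $\pi_M\cong\mathbb{Z}^3$ and $M=T^3$, and an Anosov-type monodromy preserves no rank-one subgroup, so $Z=f_*(C)$ need not be normal in $\pi_Y$ and $\pi_X/C$ need not even be a quotient group. The paper splits off this case explicitly: if $C$ has rank $>1$ then $X$ is a solvmanifold and the conclusion follows from the discussion at the start of \S4. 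With these two repairs --- the $\chi(Y)=0$ argument plus \cite[Theorem 4.5]{Hi} replacing the unsupported quotient claim, and the $T^3$ case treated separately --- your argument becomes the paper's.
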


\begin{proof}
Let $M$ be a 3-manifold such that $\pi_1(M)$ has non-trivial centre $C$,
and let $X=M(\varphi)$ be the mapping torus of a self-homeomorphism 
$\varphi$ of $M$.
If $f:X\to{Y}$ is a map with non-zero degree 
and such that $f_*=\pi_1(f)$ is an epimorphism
then $A=f_*C$ and $K=f_*\pi_1(M)$  are normal subgroups of $\pi_Y$
such that $A\leq{K}$.
Moreover, $K$ is finitely generated and $\pi_Y/K\cong\mathbb{Z}$, 
by Lemma \ref{mtorus}.

If $C\cong\mathbb{Z}$ then $A\cong\mathbb{Z}$ also, 
for otherwise $f$ would factor through $K(\pi_X/C,1)$, 
and so have degree 0, by Lemma \ref{asph}.
Hence $\beta_i^{(2)}(\pi_Y)=0$ for all $i$, 
and so $\chi(Y)=0$,
by a result of Gromov.
(See \cite[Corollary 2.3.1]{Hi}.)
Therefore $K$ is a $PD_3$-group \cite[Theorem 4.5]{Hi}.
Since $K$ has non-trivial centre it is the fundamental group
of an aspherical Seifert fibred 3-manifold, and the claim follows.

If $C$ has rank $>1$ then $M=T^3$, so $X$ is a solvmanifold and the
claim follows from the discussion in the first three paragraphs of \S4.

The condition is clearly sufficient, since we may take $f=id_Y$.
\end{proof}

\section{$\mathbb{H}^2\times\mathbb{H}^2$ and surface bundles}

A {\it Surface bundle \/} is a bundle projection $p:X\to{B}$ with fibre $F$, 
where $B$ and $F$ are closed surfaces,  $\chi(B)\leq0$ and $\chi(F)<0$.
A {\it Surface bundle space\/} is the total space of a Surface bundle,
and a {\it Surface bundle group\/} is the fundamental group 
of a Surface bundle space.
Such Surface bundles may be partitioned into three types.
Type I consists of such bundles for which the monodromy has infinite image, 
but is not injective,
type II are those which are virtually products and 
type III have injective monodromy \cite{Jo94}.
The fibration is essentially unique when the bundle is of type I; 
product bundles have only the two obvious bundle projections, 
and a surface bundle space $X$ with a fibration of type III 
may have many inequivalent fibrations.
(See the ``Johnson trichotomy" in  \cite[Chapter 5.2]{Hi}.)

If a Surface bundle space $X$ is geometric but is not a $\mathbb{H}^4$-manifold
then either $\chi(X)=0$ (so $B=T$) and the geometry is $\mathbb{H}^2\times\mathbb{E}^2$ or $\mathbb{H}^3\times\mathbb{E}^1$,
or $X$ is a reducible $\mathbb{H}^2\times\mathbb{H}^2$-manifold
\cite[Theorems 9.10, 13.5 and 13.6]{Hi}.
Every reducible $\mathbb{H}^2\times\mathbb{H}^2$-manifold 
is finitely covered by a product $B\times{F}$, 
while every $\mathbb{H}^3\times\mathbb{E}^1$-manifold is finitely covered
by a Surface bundle space of type I, with base $T$.
No Surface bundle space has the geometry $\mathbb{H}^2(\mathbb{C})$
\cite[Corollary 13.7.2]{Hi}, 
and it is believed that $\mathbb{H}^4$ is also impossible.

Pullback along a degree-1 map of bases induces a degree-1 map 
of Surface bundle spaces.
If the original bundle is of type I or III and the base change map
is not a homotopy equivalence then the induced bundle is of type I.
If the monodromy fixes a separating curve in the fibre $F$ then 
we may construct a degree-1 map by crushing one of the two 
complementary regions of this curve in $F$ to a point.
In this way we may show that every Surface bundle space of type I or II
is 1-dominated by Surface bundle spaces of types I and III.
On the other hand, we shall see that Surface bundle spaces of type II 
can only dominate bundle spaces of the same type.

\begin{lemma}
\label{degree 0}
Let $f:\pi\to{G}$ be an epimorphism of $PD_4$-groups,
where $\pi$ has a normal subgroup $\kappa$ such that $\kappa$ 
and $\pi/\kappa$ are $PD_2$-groups.
If $f(\kappa)$ or $G/f(\kappa)$ is virtually free
then $f$ has degree $0$.
\end{lemma}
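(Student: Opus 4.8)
The plan is to read off $\deg f$ from the Lyndon--Hochschild--Serre spectral sequence of the extension $1\to A\to G\to G/A\to 1$, where $A:=f(\kappa)$ is normal in $G$ because $f$ is onto and $\kappa\trianglelefteq\pi$. Writing $Q=\pi/\kappa$, the companion spectral sequence for $1\to\kappa\to\pi\to Q\to1$ has $E^2_{p,q}=H_p(Q;H_q(\kappa;\mathbb{Q}))$, and since $\kappa$ and $Q$ are $PD_2$-groups we have $H_{\ge3}(\kappa;\mathbb{Q})=H_{\ge3}(Q;\mathbb{Q})=0$; hence $H_4(\pi;\mathbb{Q})$ is concentrated in bidegree $(2,2)$, with $[\pi]$ carried by the single term $E^\infty_{2,2}(\pi)=H_2(Q;H_2(\kappa;\mathbb{Q}))\cong\mathbb{Q}$. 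As $\deg f$ is the coefficient in $f_*[\pi]=(\deg f)\,[G]$, I would track this class under the morphism of spectral sequences induced by $f$. The uniform first observation is that the target $(2,2)$-term
\[
{}'E^2_{2,2}(G)=H_2(G/A;H_2(A;\mathbb{Q}))
\]
vanishes in both cases: if $f(\kappa)=A$ is virtually free then $H_2(A;\mathbb{Q})=0$, while if $G/A$ is virtually free then $H_2(G/A;-)=0$, a virtually free group having $\mathbb{Q}$-cohomological dimension $\le1$. Thus the leading symbol of $f_*[\pi]$ at $(2,2)$ is always zero.

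When $f(\kappa)$ is virtually free this finishes the proof at once. Here $H_q(A;\mathbb{Q})=0$ for all $q\ge2$, so ${}'E^\infty_{p,q}(G)=0$ whenever $q\ge2$; every graded piece of ${}'F_2H_4(G;\mathbb{Q})$ lives in a bidegree $(p,4-p)$ with $p\le2$, that is with $q=4-p\ge2$, and so ${}'F_2H_4(G;\mathbb{Q})=0$. Since the morphism of spectral sequences preserves the filtration and $[\pi]\in F_2H_4(\pi;\mathbb{Q})$, we get $f_*[\pi]\in{}'F_2H_4(G;\mathbb{Q})=0$, whence $\deg f=0$.

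The case in which $G/f(\kappa)$ is virtually free is the substantive one, and I expect it to be the main obstacle, because now the fundamental class of $G$ lies in a \emph{lower} filtration than that of $\pi$: with $k:=\mathrm{cd}_{\mathbb{Q}}(G/A)\le1$ the spectral sequence leaves only the bidegree $(1,3)$ (or $(0,4)$ when $G/A$ is finite) in total degree $4$, so ${}'F_2H_4(G;\mathbb{Q})=H_4(G;\mathbb{Q})$ and the filtration bookkeeping above becomes vacuous. To proceed I would use that $G$ is a $PD_4$-group with a virtually free quotient to conclude, by the theorem of Bieri--Eckmann on Poincar\'e duality in group extensions, that $A$ is a $PD_{4-k}$-group; in particular $\dim A\ge3>2=\dim\kappa$. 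Then $\omega_G:=[G]^*\in H^4(G;\mathbb{Q})$ is represented as $\bar p^*\alpha\cup\Phi$, with $\alpha\in H^k(G/A;\mathbb{Q})$ and $\Phi$ restricting to a generator of $H^{\dim A}(A;\mathbb{Q})$. The point is a dimension count on the fibre: the restriction of $f^*\Phi$ to $\kappa$ equals $(f|_\kappa)^*(\Phi|_A)\in H^{\ge3}(\kappa;\mathbb{Q})=0$, since $\kappa$ is a $PD_2$-group, so the top-dimensional ``fibre part'' of $\omega_G$ cannot be absorbed by the $2$-dimensional fibre of $X$. Equivalently, $f|_\kappa\colon\kappa\to A$ is a surjection of a $PD_2$-group onto a higher-dimensional Poincar\'e duality group, hence zero on top homology, so the fibre factor of the degree vanishes.

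The delicate step, and the one I would spend most care on, is turning this fibre vanishing into the vanishing of $\deg f=\langle f^*\omega_G,[\pi]\rangle$ itself: because $\omega_G$ sits below filtration $2$, its image $f^*\omega_G$ might a priori acquire a hidden filtration-$2$ component pairing non-trivially with $[\pi]$. I would control this through the multiplicative structure, writing $f^*\omega_G=p^*(\bar f^*\alpha)\cup f^*\Phi$ and applying the projection formula $\langle p^*\beta\cup f^*\Phi,[\pi]\rangle=\langle\beta,p_*(f^*\Phi\cap[\pi])\rangle$, then showing that the vanishing of the fibre restriction of $f^*\Phi$, together with the concentration of $H_*(\pi;\mathbb{Q})$ forced by the $PD_2$-by-$PD_2$ structure, gives $p_*(f^*\Phi\cap[\pi])=0$ in $H_1(Q;\mathbb{Q})$. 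This is the rigorous form of ``a surface cannot dominate a higher-dimensional fibre'', and I anticipate it to be the crux of the argument.
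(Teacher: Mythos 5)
Your first case is correct and coincides with the paper's argument: since $A=f(\kappa)$ has $\mathbb{Q}$-homological dimension $\leq1$, all of $F_2H_4(G;\mathbb{Q})$ vanishes and $f_*[\pi]$ dies. Note, though, that the paper disposes of \emph{both} cases by the same one-line comparison of LHS spectral sequences: in total degree $4$ every map $E^2_{p,q}(\pi)\to{E^2_{p,q}(G)}$ has zero domain or zero codomain --- in your second case because $E^2_{1,3}(\pi)=H_1(Q;H_3(\kappa;\mathbb{Q}))=0$ and $E^2_{0,4}(\pi)=0$, while $E^2_{2,2}(G)=0$ --- and it concludes $H_4(f)=0$ outright, with no appeal to any duality structure on $A$. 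Your scruple that the filtration bookkeeping becomes vacuous when the fundamental class of $G$ sits in filtration $\leq1$ is a reasonable one to raise, but it is not resolved by the substitute argument you offer.

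The genuine gap is precisely that substitute. The appeal to Bieri--Eckmann fails at the outset: that theorem requires the normal subgroup $A$ to be $FP$ (and the quotient to be a $PD_k$-group), whereas here $A$ is only known to be finitely generated, being a quotient of the surface group $\kappa$; moreover a virtually free group of rank $\geq2$ is not a $PD_1$-group, so the hypothesis on $G/A$ also fails except in the two-ended case. Indeed, a finitely generated normal subgroup with virtually free quotient need not be a duality group at all --- compare Lemma \ref{I-II}, where such subgroups can be extensions of a $PD_2$-group by a free group of infinite rank. With ``$A$ is a $PD_{4-k}$-group'' gone, the class $\Phi$, the decomposition $\omega_G=\bar{p}^*\alpha\cup\Phi$ (which would in any case need justification, since classes in $E_\infty^{1,3}$ are not in general products of global classes), and the dimension count on the fibre all collapse; and you acknowledge yourself that the concluding projection-formula step --- the ``crux'' --- is left unproved. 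So the proposal establishes the lemma only when $f(\kappa)$ is virtually free; for the case that $G/f(\kappa)$ is virtually free you should either supply the missing finiteness input for $A$, or argue as the paper does, directly from the vanishing of all the comparison maps between the $E^2$-terms in total degree $4$.
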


\begin{proof}
If $f(\phi)=1$ then $f$ factors though the $PD_2$-group $\rho=\pi/\kappa$, 
and so $f$ has degree 0, by Lemma \ref{asph}.
In general,  $f$ induces a map between the LHS spectral sequences for 
$\mathbb{Q}$-homology of $\pi$ and $G$ as extensions or $\rho$ by $\kappa$ 
and of $G/f(\kappa)$ by $f(\kappa)$, respectively. 
If $f(\kappa)$ or $G/f(\kappa)$ is virtually free then the homomorphisms 
between $E^2_{p,q}$ terms  with $p+q=4$ either have domain 0 or codomain 0.
Hence they are all trivial, and so $H_4(f)=0$.
\end{proof}

We have a nearly complete understanding of the possibilities when $X$ is of type II
and $Y$ is aspherical.

\begin{theorem}
\label{BxF}
Let $X=B\times{F}$, where $B$ and $F$ are hyperbolic surfaces,
and let $f:X\to{Y}$ be an essentially dominating map with aspherical codomain $Y$.
If $\chi(Y)\not=0$ or if $Y$ is the total space of a bundle with base or fibre $T$
then $Y$ is homotopy equivalent to a product of surfaces. 
\end{theorem}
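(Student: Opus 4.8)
The plan is to pass to group theory and exploit the product structure $\pi_X=\pi_1(B)\times\pi_1(F)$. Write $\beta=\pi_1(B)$, $\sigma=\pi_1(F)$; since $Y$ is aspherical, $\pi_Y$ is a $PD_4$-group and we may take $f_*$ onto. Put $A=f_*\beta$ and $S=f_*\sigma$. As $\beta$ and $\sigma$ commute and generate $\pi_X$, the subgroups $A,S$ are commuting normal subgroups of $\pi_Y$ with $AS=\pi_Y$, so $Z:=A\cap S$ is central and the multiplication map gives a central extension $1\to Z\to A\times S\xrightarrow{q}\pi_Y\to1$ through which $f_*$ factors as $q\circ(f_*|_\beta\times f_*|_\sigma)$. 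The two product fibrations of $X$ present $\pi_X$ as a surface-by-surface group in two ways, so Lemma \ref{degree 0} applies with $\kappa=\sigma$ and with $\kappa=\beta$: since $\deg f\ne0$, none of $A$, $S$, $\pi_Y/A$, $\pi_Y/S$ is virtually free.

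First I treat the hypothesis $\chi(Y)\ne0$. If $Z\ne1$ it contains a central $\mathbb{Z}$ (as $\pi_Y$ is torsion-free), and a $PD_4$-group with a normal $\mathbb{Z}$ has vanishing $L^2$-Betti numbers, forcing $\chi(Y)=0$, a contradiction; hence $Z=1$ and $\pi_Y=A\times S$ is a genuine direct product. A direct factor of a $PD_4$-group is a $PD$-group, so $A$ is $PD_j$ and $S$ is $PD_{4-j}$, and neither is trivial (else $f_*$ factors through a surface group, giving degree $0$ by Lemma \ref{asph}). To pin down $j=2$ I examine the top class: under the product map the generator of $H_4(\pi_X;\mathbb{Q})=H_2(\beta)\otimes H_2(\sigma)$ maps into the summand $H_2(A)\otimes H_2(S)$ of $H_4(\pi_Y;\mathbb{Q})$, and if $j\ne2$ one factor is infinite cyclic, so its $H_2$ vanishes and this summand is $0$; then $H_4(f_*)=0$, contradicting $\deg f\ne0$. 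Thus $A,S$ are orientable $PD_2$-groups, i.e.\ surface groups, necessarily hyperbolic since $\chi(Y)\ne0$, and $Y\simeq C\times D$ with $\pi_1(C)=A$, $\pi_1(D)=S$.

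Now suppose instead that $Y$ is the total space of a bundle with base or fibre $T$, so $\chi(Y)=0$ and $Z$ may be nontrivial. In the representative case of fibre $T$ over a hyperbolic base, $\pi_Y$ has a normal $N\cong\mathbb{Z}^2$ with $\pi_Y/N$ a hyperbolic surface group. The images of $A$ and $S$ in this indecomposable, centreless quotient commute and generate it, so one of them, say $S$, lies in $N$; then $S$ centralises both $A$ and $N$, hence $S$ is central, and by the non-virtually-free conclusion $S\cong\mathbb{Z}^2$ has finite index in $N$. This forces finite monodromy, so $Y$ is geometric of type $\mathbb{H}^2\times\mathbb{E}^2$ or $\widetilde{\mathbb{SL}}\times\mathbb{E}^1$, and everything reduces to the central extension above with $S\cong\mathbb{Z}^2$ central and $Z\subseteq S$. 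The heart of the matter is to show $Z=1$. For this I again use that $\deg f\ne0$ forces $H_4(f_*)\ne0$: writing $r=\operatorname{rank}Z$, the extension yields a fibration $T^r\to B(A\times S)\xrightarrow{q}K(\pi_Y,1)$, and in its Serre spectral sequence the image of $q_*$ on $H_4$ lies in the kernel of $d_2=\,\cap\,(\text{Euler classes})\colon H_4(\pi_Y;\mathbb{Q})\to H_2(\pi_Y;\mathbb{Q}^r)$. Poincaré duality in the $PD_4$-group $\pi_Y$ shows that capping the fundamental class with a nonzero degree-two class is injective, so a nonzero Euler class would make $q_*$, and hence $H_4(f_*)=H_4(q)\circ H_4(f_*|_\beta\times f_*|_\sigma)$, vanish; moreover the product class sits in the $H_2(A)\otimes H_2(S)$ position, which a nontrivial torus fibre $T^r$ would also absorb. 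Hence $Z=1$, $\pi_Y=A\times S$ with $S\cong\mathbb{Z}^2$, so $A\cong\pi_Y/S$ is a $PD_2$-group virtually equal to $\pi_1(\pi_Y/N)$, i.e.\ a surface group, and $Y\simeq C\times T$. The base-$T$ case, where the quotient $\mathbb{Z}^2$ is decomposable and the indecomposability shortcut is unavailable, is handled by the same spectral-sequence and Poincaré-duality device applied to the normal surface subgroup.

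The step I expect to be the main obstacle is precisely this $\chi(Y)=0$ analysis: ruling out the \emph{twisted} targets (the genuinely non-product $\widetilde{\mathbb{SL}}\times\mathbb{E}^1$-manifolds) rather than the honest products. Equivalently, one must show that $A=f_*\beta$ cannot acquire a nontrivial central direction, i.e.\ that a surface group cannot map onto a $PD_3$-group with the fundamental class surviving; the nonzero-degree hypothesis, fed through the Gysin sequence and Poincaré duality in $\pi_Y$ as above, is exactly the leverage that forces the relevant Euler classes, and then $Z$ itself, to be trivial. The remaining work is bookkeeping: controlling the finite-index ambiguity between $Z$, $S$ and $N$, realising the resulting product of surface groups by a homotopy equivalence with the asserted product (the target being aspherical), and reducing the non-geometric and base-$T$ subcases to the same mechanism.
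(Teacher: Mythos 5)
Your first branch, where $\chi(Y)\neq0$, is correct and essentially coincides with the paper's argument: the paper likewise forms the commuting normal images $G=f_*\pi_1(B)$ and $N=f_*\pi_1(F)$, observes that $G\cap{N}$ is central, and uses vanishing of $L^2$-Betti numbers to force $G\cap{N}=1$ when $\chi(Y)\neq0$. Your finish (direct factors of $PD_4$-groups are $PD$-groups, plus the observation that the image of $[B]\times[F]$ lies in the $H_2\otimes{H_2}$ K\"unneth summand) is a legitimate variant of the paper's, which instead excludes trivial or two-ended factors via Lemma \ref{degree 0} and then quotes \cite[Theorem 3.10]{Hi}; your direct-factor assertion does need the retract/$FP$ argument or a citation, but it is standard.

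The $\chi(Y)=0$ branch has a genuine gap, at exactly the step you flag as the heart of the matter. Your Serre spectral sequence for $T^r\to{K(A\times{S},1)}\to{K(\pi_Y,1)}$ shows at most that the class of the extension $1\to{Z}\to{A\times{S}}\to\pi_Y\to1$ is rationally trivial; the conclusion ``hence $Z=1$'' does not follow from that, and in fact $Z=1$ is \emph{false} under the theorem's hypotheses. Let $g:B\to{C}$ and $h:F\to{T}$ be degree-1 pinch maps ($C$, $F$ hyperbolic), and let $\phi:\pi_1(B)\to\mathbb{Z}^2$ be a homomorphism which is non-trivial on $\ker\pi_1(g)$ (possible, since this kernel surjects onto $\ker(H_1(B)\to{H_1(C)})\cong\mathbb{Z}^2$). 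The map $f:B\times{F}\to{C\times{T}}$ given by $f(b,x)=(g(b),\phi'(b)+h(x))$, where $\phi'$ realizes $\phi$, is $\pi_1$-surjective and has degree 1, because the correction terms in $H_4$ involve Pontryagin products landing in $H_{\geq3}(T;\mathbb{Q})=0$; yet $Z=A\cap{S}=1\times\phi(\ker\pi_1(g))\neq1$. So no sharpening of your Euler-class computation on that central extension can close the gap: the product structure of $\pi_Y$ must be extracted while allowing $Z\neq1$. Note also that in this example $A$ \emph{is} a $PD_3$-group (a central extension of $\pi_1(C)$ by $\mathbb{Z}$) onto which a surface group maps compatibly with a degree-1 map, so the auxiliary lemma you predicted would be the key (``a surface group cannot map onto a $PD_3$-group with the fundamental class surviving'') is also false as stated.

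The paper's route is different: assuming $Z=G\cap{N}\cong\mathbb{Z}^r\neq1$ (finite generation coming from the bundle hypothesis), it passes to $\overline\pi=\pi_Y/Z\cong(\pi_Y/G)\times(\pi_Y/N)$, shows $H^{4-r}(\overline\pi;\mathbb{Z}[\overline\pi])\cong\mathbb{Z}$ by an LHS spectral sequence argument, and deduces from the K\"unneth theorem that $\pi_Y/G$ or $\pi_Y/N$ is finite or two-ended, which is then played off against Lemma \ref{degree 0}. A caveat you should be aware of: in the example above $\pi_Y/A\cong\mathbb{Z}$, so the example also tests the ``$G/f(\kappa)$ virtually free'' half of Lemma \ref{degree 0} --- all maps of $E^2$-terms in total degree 4 vanish there, but the fundamental class of the target sits in the lower filtration piece $H_1(\mathbb{Z};H_3(A;\mathbb{Q}))$, so vanishing on $E^2$ does not give $H_4(f)=0$. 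This filtration shift is precisely the subtlety your argument (and any repair of it) must confront; your base-$T$ subcase, which is only waved at, and the residual task of excluding a non-trivial rational Euler class for $1\to{N}\to\pi_Y\to\pi_Y/N\to1$ (i.e.\ ruling out $\widetilde{\mathbb{SL}}\times\mathbb{E}^1$-targets) concern a different extension from the one your spectral sequence addresses, and remain open in your write-up.
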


\begin{proof}
Let $G$ and $N$ be the images of $\pi_1(B)$ and $\pi_1(F)$, respectively.
Then $G$ and $N$ are finitely generated normal subgroups of $\pi_Y$
such that $gn=ng$ for all $g\in{G}$ and $n\in{N}$, and  $\pi_Y=GN$.
Hence $gh=hg$ for all $g\in{G\cap{N}}$ and all $h\in\pi_Y$,
and so $G\cap{N}$ is a subgroup of the centre of $\pi_Y$.
Let $\overline\pi=\pi_Y/(G\cap{N})$.
Then $\overline\pi\cong\pi/G\times\pi/N$.

If $G\cap{N}\not=1$ then $\beta_i^{(2)}(Y)=0$ for all $i$, and so $\chi(Y)=0$.
Since we are assuming in this case that $Y$ is a bundle space,
$G\cap{N}\cong\mathbb{Z}^r$, for some $r>0$.
Then $\overline\pi$ is $FP$, and hence so are $\pi/G$ and $\pi/N$.
Consideration of the LHS spectral sequence for $\pi_Y$ as an
extension of $\overline\pi$ by $\mathbb{Z}^r$ then shows that
$H^{4-r}(\overline\pi;\mathbb{Z}[\overline\pi])\cong\mathbb{Z}$.
(Compare \cite[Theorem 3.10]{Hi}.)
It then follows from the K\"unneth Theorem for
$\overline\pi\cong\pi/G\times\pi/N$ that either $\pi/G$ or $\pi/N$ 
 two ends or one is finite.
This contradicts Lemma \ref{degree 0}.

Therefore $G\cap{N}=1$, so $\pi_Y\cong{G}\times{N}$, 
and so $G$ and $N$ are finitely presentable.
Hence they are in fact $FP_3$ (each being a quotient of an $FP$
group by a finitely presentable group).
Neither $G$ nor $N$ is trivial or $\mathbb{Z}$, by Lemma \ref{degree 0},
and so each is a $PD_2$-group \cite[theorem 3.10]{Hi}.
\end{proof}

We can push the argument for this theorem a little further if 
$\chi(Y)=0$ and $G\cap{N}\not=1$.
We note first that $c.d.G\leq3$ and $c.d.N\leq3$, 
since $G$ and $N$ each have infinite index in $\pi_Y$.
Hence $G\cap{N}$ has rank $r\leq3$.
The argument of the theorem shows that $G\cap{N}$ is not finitely generated
Hence  it has rank $r<c.d.G\cap{N}<4$
and so $r=1$ or 2.

Suppose that $r=2$.
Then  $c.d.G\cap{N}=3$, 
since $G\cap{N}$ is not finitely generated.
But $c.d.G=3$ also, and so $G$ is abelian \cite[Theorem 8.8 ]{Bi}.
Since $G$ is finitely generated,
$G\cap{N}$ must be finitely generated also.
This is a contradiction, and so $r=1$.
It remains an open question whether the centre of a $PD_n$-group
must be finitely generated, if $n>3$.

General results on essential dominations of aspherical 4-manifolds 
by Surface bundle spaces of types I or III appear to be hard to find.
When $Y$ is also a bundle space we should ask whether
such a domination need be compatible with a fibration,
so that the normal subgroup $N=f_*\pi_1(F)$ of $\pi_Y$ 
and the quotient $\pi_Y/N$ are each $PD_2$-groups.

\begin{lemma}
\label{I-II}
Let $E$ be a Surface bundle group.
If $N$ is a finitely generated infinite normal subgroup of infinite index
 in $E$ then $N$ is either a $PD_2$-group or is an extension of a $PD_2$-group
by a free group of infinite rank.
Moreover either $N$ or $E/N$ has one end.
\end{lemma}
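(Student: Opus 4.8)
The plan is to exploit the defining fibration $p:E\to B$ with fibre $F$, so that $E$ sits in an extension $1\to\pi_1(F)\to E\to\pi_1(B)\to1$ of a $PD_2$-group $\pi_1(B)$ by a $PD_2$-group $\pi_1(F)$. Write $\kappa=\pi_1(F)$ and $\rho=\pi_1(B)$. Given a finitely generated infinite normal subgroup $N$ of infinite index, I would first analyse how $N$ interacts with $\kappa$. Since $N$ is normal in $E$, both $N\cap\kappa$ and the image $\bar N$ of $N$ in $\rho$ are normal (in $\kappa$ and $\rho$ respectively), and $N/(N\cap\kappa)\cong\bar N$. The engine here is the classification of normal subgroups of $PD_2$-groups: a normal subgroup of a surface group is either trivial, of finite index (hence again a $PD_2$-group), or infinite of infinite index and therefore free — and in the surface-group case a nontrivial normal subgroup of infinite index is infinite cyclic only in the torus case, which is excluded since $\chi(F)<0$ forces $\kappa$ to be a hyperbolic surface group with no such $\mathbb Z$ normal subgroups. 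So $N\cap\kappa$ is either trivial, all of $\kappa$ (finite index), or a free group, and similarly for $\bar N\trianglelefteq\rho$.

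\textbf{Case analysis driving the dichotomy.} Next I would run the case analysis on $N\cap\kappa$. If $N\cap\kappa$ has finite index in $\kappa$, then $N$ contains a $PD_2$-group as a finite-index-in-$\kappa$ normal subgroup and surjects (with finite-or-free kernel pieces) onto $\bar N\trianglelefteq\rho$; pinning down the two subcases ($\bar N$ trivial versus infinite) should yield that $N$ is itself virtually a $PD_2$-group, i.e.\ a $PD_2$-group (after noting finite generation rules out the free-of-infinite-rank behaviour here). The interesting case is when $N\cap\kappa$ is an infinite free normal subgroup of infinite index in $\kappa$: then $N$ is an extension $1\to N\cap\kappa\to N\to\bar N\to1$ with $N\cap\kappa$ free. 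To land on the stated conclusion I want $\bar N$ to be a $PD_2$-group and $N\cap\kappa$ to be free of infinite rank. Finite generation of $N$ combined with $N\cap\kappa$ being a nontrivial \emph{infinite-rank} free group (an infinite normal subgroup of infinite index in a surface group is free of infinite rank — it is never finitely generated unless trivial or finite index) forces $\bar N$ to have finite index in $\rho$, hence $\bar N$ is a $PD_2$-group. The remaining combination, $N\cap\kappa$ trivial, would make $N$ inject into $\rho$ as a normal subgroup, giving $N$ free or $PD_2$, again consistent with the statement.

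\textbf{The two-ends clause.} For the final assertion that either $N$ or $E/N$ has one end, the plan is to use the $L^2$/end-invariant constraints coming from $E$ being a $PD_4$-group together with Strebel's theorem that a normal subgroup of a $PD_n$-group has homological dimension controlled by that of the quotient. The cleanest route is: if $N$ has two ends then $N$ is virtually $\mathbb Z$, and then the extension $1\to N\to E\to E/N\to1$ would present $E/N$ as a $PD_3$-group (a $PD_4$-group modulo a virtually-$\mathbb Z$ normal subgroup has a $PD_3$ quotient, by a spectral-sequence/duality argument of Bieri–Strebel type), which has one end; symmetrically if $E/N$ has two ends then $N$ inherits $PD_3$-type and has one end. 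So two-endedness of both $N$ and $E/N$ simultaneously is incompatible with the $PD_4$ Poincaré duality of $E$. The step I expect to be the main obstacle is precisely controlling the infinite-rank free case: ensuring that finite generation of $N$ genuinely forces $\bar N$ to have finite index (rather than $N\cap\kappa$ being finitely generated free), and verifying that no exotic intermediate possibility survives. This hinges on the fact that an infinite, infinite-index normal subgroup of a hyperbolic surface group cannot be finitely generated, which I would cite from the structure theory of surface groups rather than reprove.
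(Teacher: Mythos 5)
Your treatment of the first assertion is essentially the paper's own argument: project along the fibration $p:E\to\pi_1(B)$, note that $p(N)$ and $N\cap\pi_1(F)$ are normal (in $\pi_1(B)$ and $\pi_1(F)$ respectively), and invoke the structure of normal subgroups of hyperbolic surface groups (a nontrivial finitely generated normal subgroup has finite index; a nontrivial normal subgroup of infinite index is free of infinite rank, never finitely generated). Your case analysis closes up correctly: if $p(N)\not=1$ then $[\pi_1(B):p(N)]<\infty$, which forces $N\cap\pi_1(F)$ to have infinite index in $\pi_1(F)$, hence to be trivial (so $N\cong p(N)$ is a $PD_2$-group) or free of infinite rank (so $N$ is the stated extension, with $p(N)$ of finite index because $N$ is finitely generated and cannot equal the infinite-rank kernel); if $p(N)=1$ then $N$ is a finitely generated infinite normal subgroup of $\pi_1(F)$, hence of finite index and $PD_2$. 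Two small points of looseness: in your trivial-intersection case the alternative ``$N$ free'' would \emph{not} be consistent with the statement -- it simply cannot occur, for the reason you give -- and your analysis, like the paper's, tacitly assumes the base is hyperbolic (the paper's definition allows $\chi(B)=0$, where the first step $[\pi_1(B):p(N)]<\infty$ fails).

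The ends clause is where you have a genuine gap. A finitely generated infinite group has one, two, or infinitely many ends, so ``either $N$ or $E/N$ has one end'' requires excluding \emph{every} configuration in which both have more than one end -- in particular the case where both have infinitely many ends. Your argument only fires when one of the two groups has exactly two ends (virtually $\mathbb{Z}$), where a Bieri--Strebel-type result makes the other group $PD_3$ and hence one-ended; it is silent when $N$ and $E/N$ both have infinitely many ends, and nothing in your setup rules that configuration out. (Even in the two-ended subcases the $PD_3$ conclusions need $FP$-type hypotheses on the complementary group that you would have to verify, but that is repairable; the infinitely-many-ends case is the real hole.) The paper's proof handles all cases uniformly: in the LHS spectral sequence for $E$ as an extension of $E/N$ by $N$ with coefficients $\mathbb{Z}[E]$, the term
\[
E_2^{1,1}=H^1(E/N;H^1(N;\mathbb{Z}[E]))\cong H^1(E/N;\mathbb{Z}[E/N])\otimes H^1(N;\mathbb{Z}[N])
\]
survives to $E_\infty$ (the neighbouring terms vanish because $N$ and $E/N$ are finitely generated and infinite), so it must be $0$ since $H^2(E;\mathbb{Z}[E])=0$ for the $PD_4$-group $E$. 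Both tensor factors are free abelian -- trivial, $\mathbb{Z}$, or of infinite rank according as the group has one, two, or infinitely many ends -- so one factor vanishes, i.e., one of the two groups is one-ended. To complete your proof you need this corner argument (or an equivalent); the two-ended dichotomy alone proves a strictly weaker statement.
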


\begin{proof}
Let $p:E\to{G}$ be the projection of $E$ onto $G=\pi_1(B)$, 
with kernel $K=\pi_1(F)$.
If $p(N)\not=1$ then $[G:p(N)]<\infty$, and $\mathrm{Ker}(p|_N)$ 
has infinite index in $K$. 
Hence either $p|_N$ is a monomorphism or $\mathrm{ker}(p|_N)$ 
is free of infinite rank. (This uses the fact that $F$ is hyperbolic!)

The second assertion follows from consideration of the LHS spectral sequence for
$E$ as an extension of $E/N$ by $N$.
Since $E/N$ and $N$ are finitely generated and infinite,
$H^p(E/N;H^q(N;\mathbb{Z}[E]))=0$ if $q=0$ or if $p=0$ and $q=1$,
while 
\[H^1(E/N;H^1(N;\mathbb{Z}[E]))\cong
{H^1(E/N;\mathbb{Z}[E/N])}\otimes{H^1(N;\mathbb{Z}[N])}.
\]
Since $H^i(E;\mathbb{Z}[E])=0$ for $i\leq3$,
a spectral sequence corner argument shows that 
$H^1(E/N;H^1(N;\mathbb{Z}[E]))=0$.
Since the terms in the tensor product are free abelian groups,
at least one must be 0.
\end{proof}

If $\beta_1(E)>\beta_1(B)$ then there are finitely generated
normal subgroups $N$ such that $E/N\cong\mathbb{Z}$
and $\mathrm{ker}(p|_N)$ is nontrivial,
and if, moreover, $\chi(E)\not=0$ then such a subgroup $N$
cannot be $FP_2$ \cite{FV19}.

\section{domination of aspherical geometric manifolds by 
surface bundle spaces}

Suppose that $X$ is a Surface bundle space which dominates a  
$\mathbb{H}^2\times\mathbb{E}^2$-manifold $Y$.
After passing to finite covers we may assume that $Y$ is a product $C\times{T}$,
where $C$ is a hyperbolic surface.
If $N$ is a non-trivial finitely generated normal subgroup of infinite index in
$\pi_Y$ then either $N$ has finite index in $\pi_1(C)$ or $\pi_1(T)$, 
or $N\cong\mathbb{Z}<\pi_1(T)$ or $N\cap\pi_1(C)$ has finite index 
in $\pi_1(C)$ and $\pi_Y/N$ is virtually $\mathbb{Z}$.
The latter two possibilities are ruled out by Lemma \ref{degree 0},
and so $f$ must be compatible with one of the two projections from $Y$
to $C$ or $T$.

If $Y$ is a $\widetilde{\mathbb{SL}}\times\mathbb{E}^1$-manifold
then after passing to finite covers we may assume that $Y$ is the total space 
of a $T$-bundle over a hyperbolic surface.
A similar argument applies, 
except that in this case the bundle fibration is unique and the
only normal $PD_2$-groups in $\pi_Y$ are subgroups of finite index in $\pi_1(T)$.
Hence $f$ must be compatible with the fibration of $Y$.
In particular,
no $\widetilde{\mathbb{SL}}\times\mathbb{E}^1$-manifold
is dominated by a product of surfaces.

Suppose now that $Y$ is a solvmanifold.
The image of $\pi_1(F)$ in $\pi_Y$ is a finitely generated normal subgroup,
and is torsion-free polycyclic.
It must have Hirsch length 2, by Lemma \ref{degree 0},
and so $\pi_1(f)$ factors though the group of a $T$-bundle over $B$.
Every solvmanifold with geometry $\mathbb{E}^4$,
$\mathbb{N}il^3\times\mathbb{E}^1$,
$\mathbb{N}il^4$ or $\mathbb{S}ol^3\times\mathbb{E}^1$
is the total space of a $T$-bundle over $T$.
Hence such manifolds are 1-dominated by Surface bundle spaces.

There are  epimorphisms from Surface bundle groups 
to semidirect products $\mathbb{Z}^3\rtimes\mathbb{Z}$,
with $\pi_1(F)$ mapping onto $\mathbb{Z}^3$.
However all such maps have degree 0,
and no solvmanifold of type $\mathbb{S}ol^4_{m,n}$ 
or $\mathbb{S}ol^4_0$ is dominated by a Surface bundle space.
Similarly, no $\mathbb{S}ol^4_1$-manifold is dominated by a Surface bundle space.

If $Y$ is aspherical then the image of $\pi_1(F)$ in $\pi_Y$ is
a finitely generated normal subgroup, and $G=\pi_Y/K$ is finitely presentable.
Lemma \ref{degree 0} implies that $K$ is not a free group
and that $G$ is not virtually free, so $[\pi_Y:K]=\infty$.
Hence $c.d.K=2$ or 3.

\newpage

\end{document}